\title{Descriptive Combinatorics and Distributed Algorithms}
\date{}
\author{Anton~Bernshteyn}
\address{\normalfont School of Mathematics, Georgia Institute of Technology, Atlanta, GA, USA}
\email{bahtoh@gatech.edu}
\thanks{This is a draft of an article to appear in the October 2022 issue of the \emph{Notices of the AMS}. This work is partially supported by the NSF grant DMS-2045412.}
\newtheoremstyle{bfnote}%
{}{}%
{\slshape}{}%
{\bfseries}{\bfseries.}%
{ }%
{\thmname{#1}\thmnumber{ #2}\thmnote{ \ep{\normalfont{}#3}}}
\theoremstyle{bfnote}
\newtheorem{theo}{Theorem}[section]
\newtheorem*{theo*}{Theorem}
\newtheorem{prop}[theo]{Proposition}
\newtheorem*{corl*}{Corollary}
\newtheorem{axiom}[theo]{Axiom}
\newtheorem*{claim*}{Claim}
\theoremstyle{definition}
\newtheorem{defn}[theo]{Definition}
\newtheorem*{defn*}{Definition}
\newtheorem{ques}[theo]{Problem}
\newtheorem*{exmp*}{Example}
\theoremstyle{remark}
\newtheorem*{ques*}{Question}
\newtheorem*{remk*}{Remark}
\newcommand*{\myproofname}{Proof}
\newenvironment{claimproof}[1][\myproofname]{\begin{proof}[#1]}{\end{proof}}
\newcommand{\0}{\varnothing}
\newcommand{\set}[1]{\{#1\}}
\newcommand{\N}{{\mathbb{N}}}
\newcommand{\Z}{\mathbb{Z}}
\newcommand{\F}{\mathbb{F}}
\newcommand{\R}{\mathbb{R}}
\newcommand{\Q}{\mathbb{Q}}
\renewcommand{\epsilon}{\varepsilon}
\renewcommand{\phi}{\varphi}
\renewcommand{\theta}{\vartheta}
\renewcommand{\leq}{\leqslant}
\renewcommand{\geq}{\geqslant}
\newcommand{\defeq}{\coloneqq}
\newcommand{\bemph}[1]{{\normalfont#1}} 
\newcommand{\ep}[1]{\bemph{(}#1\bemph{)}} 
\newcommand{\red}[1]{{\color{red}#1}}
\newcommand{\symdif}{\vartriangle}
\newcommand{\LOCAL}{$\mathsf{LOCAL}$\xspace}
\numberwithin{equation}{section}
\titleformat{\section}[block]{\large\bfseries}{\thesection.}{1ex}{}
\titleformat{\subsection}[block]{\bfseries}{\thesubsection.}{1ex}{}
\titleformat{\subsubsection}[block]{\itshape}{\bfseries\upshape\thesubsubsection.}{1ex}{}
\titlespacing*{\section}{0pt}{*3}{*1}
\titlespacing*{\subsection}{0pt}{*3}{*1}
\titlespacing*{\subsubsection}{0pt}{*1.5}{*1}
\setlist{topsep=3pt,itemsep=3pt}
\begin{document}

\maketitle

    \section*{}
    In this article we shall explore a fascinating area called \emph{descriptive combinatorics} and its recently discovered connections to \emph{distributed algorithms}---a fundamental part of computer science that is becoming increasingly important in the modern era of decentralized computation. The interdisciplinary nature of these connections means that there is very little common background shared by the researchers who are interested in them. With this in mind, this article was written under the assumption that the reader would have close to no background in either descriptive set theory \emph{or} computer science. The reader will judge to what degree this endeavor was successful.
    
    The article comprises two parts. In the first part we give a brief introduction to some of the central notions and problems of descriptive combinatorics. The second part is devoted to a survey of some of the results concerning the interactions between descriptive combinatorics and distributed algorithms, as well as a few open problems.
    
    
    \section{A brief introduction to descriptive combinatorics}
    
    \subsection{Basic notions of descriptive set theory}
    
    \subsubsection{Countable computation and Borel sets}
    
    Descriptive combinatorics is an area that emerged quite recently \ep{about two decades ago} as the result of a symbiosis between \emph{combinatorics} \ep{especially graph theory} and \emph{descriptive set theory}. For excellent surveys of this subject, see \cite{KechrisMarks} by Kechris and Marks and \cite{Pikh_survey} by Pikhurko. In addition to combinatorics and descriptive set theory, descriptive combinatorics has close connections to numerous other branches of mathematics, such as ergodic theory, topological dynamics, probability theory, and model theory, to name a few.
    
    To motivate the questions studied in descriptive combinatorics, it will be beneficial to first briefly discuss descriptive set theory more abstractly.
    
    \begin{figure}[t]

    \begin{center}    

    \begin{tikzpicture}[xscale=1.5]
        \node at (-1.2,0) {inputs};
    
        \node[circle,draw] (x0) at (0,0) {$x_0$};
        \node[circle, draw] (x1) at (1,0) {$x_1$};
        \node[circle, draw] (x2) at (2,0) {$x_2$};
        \node[circle, draw] (x3) at (3,0) {$x_3$};
        \node[circle, draw] (x4) at (4,0) {$x_4$};
        \node[circle, draw] (x5) at (5,0) {$x_5$};
        \node[circle,inner sep=5] (dot0) at (6,0) {$\ldots$};
        
        \node[circle, draw,inner sep=2] (and0) at (0,2) {$\mathtt{and}$};
        \node[circle, draw,inner sep=2] (and1) at (1,2) {$\mathtt{and}$};
        \node[circle, draw,inner sep=2] (and2) at (2,2) {$\mathtt{and}$};
        \node[circle, draw,inner sep=2] (and3) at (3,2) {$\mathtt{and}$};
        \node[circle, draw,inner sep=2] (and4) at (4,2) {$\mathtt{and}$};
        \node[circle, draw,inner sep=2] (and5) at (5,2) {$\mathtt{and}$};
        \node[circle,inner sep=5] (dot1) at (6,2) {$\ldots$};
        
        \node[circle,draw] (or) at (2.5,4) {$\mathtt{or}$};
        
        \node at (1.3,6) {output};
        \node[circle,draw,inner sep=2] (not) at (2.5,6) {$\mathtt{not}$};
        
        \begin{scope}[thick,decoration={markings,mark=at position 0.5 with {\arrow{stealth}}}] 
        \draw[postaction={decorate}] (x0) -- (and0);
        \draw[postaction={decorate}] (x1) -- (and0);
        
        \draw[postaction={decorate}] (x1) -- (and1);
        \draw[postaction={decorate}] (x2) -- (and1);
        
        \draw[postaction={decorate}] (x2) -- (and2);
        \draw[postaction={decorate}] (x3) -- (and2);
        
        \draw[postaction={decorate}] (x3) -- (and3);
        \draw[postaction={decorate}] (x4) -- (and3);
        
        \draw[postaction={decorate}] (x4) -- (and4);
        \draw[postaction={decorate}] (x5) -- (and4);
        
        \draw[postaction={decorate}] (x5) -- (and5);
        \draw[postaction={decorate}] (dot0) -- (and5);
        
        \draw[postaction={decorate}] (dot0) -- (dot1);
        
        \draw[postaction={decorate}] (and0) -- (or);
        \draw[postaction={decorate}] (and1) -- (or);
        \draw[postaction={decorate}] (and2) -- (or);
        \draw[postaction={decorate}] (and3) -- (or);
        \draw[postaction={decorate}] (and4) -- (or);
        \draw[postaction={decorate}] (and5) -- (or);
        \draw[postaction={decorate}] (dot1) -- (or);
        
        \draw[postaction={decorate}] (or) -- (not);
        \end{scope}
    \end{tikzpicture}

    \caption{A countable circuit that outputs $1$ if the input bit string does not contain two consecutive $1$s.\label{fig:infcirc}}
    \end{center}
    \end{figure}
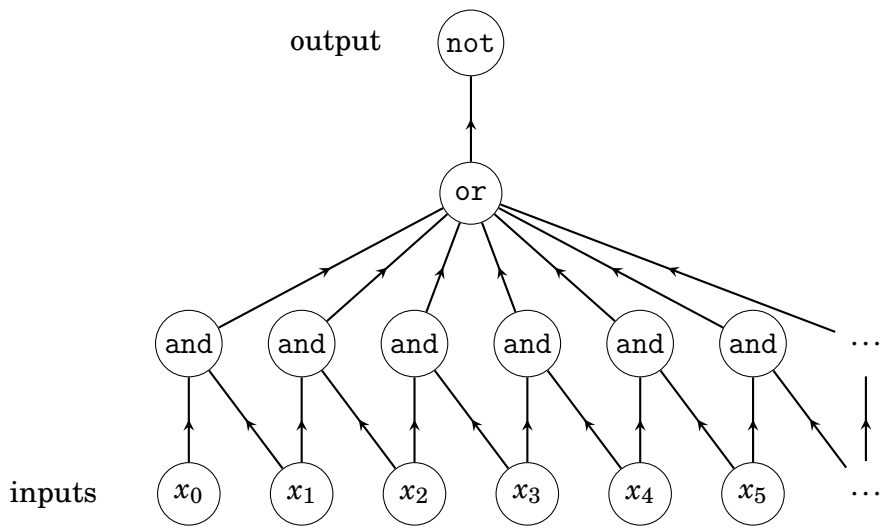
    
    One way \ep{out of many} of thinking about descriptive set theory is that it provides a versatile framework for gauging the inherent difficulty of mathematical problems pertaining to countable structures, in a manner analogous to how computational complexity theory gauges the inherent difficulty of finite problems. To pursue this analogy further, we can naturally present the basic concepts of descriptive set theory using a particular model of computation, namely \emph{Boolean circuits}. An example of an \ep{infinite} Boolean circuit is shown in Fig.~\ref{fig:infcirc}. It is a network consisting of nodes, also called \emph{gates}, joined by directed edges, or \emph{wires}. The bottom layer comprises the \emph{input nodes}. Each input node receives a bit value---$0$ or $1$. The values then propagate through the network along the wires, with every gate computing a particular Boolean function of the values that feed into it: \texttt{not}, \texttt{and}, or \texttt{or}. Finally, one or more nodes are designated as the \emph{outputs}. Thus, a Boolean circuit can be used to compute a function $\set{0,1}^{\mathrm{In}} \to \set{0,1}^{\mathrm{Out}}$, where $\mathrm{In}$ and $\mathrm{Out}$ are the sets of the input and the output nodes respectively. For example, the circuit in Fig.~\ref{fig:infcirc} computes the function $\set{0,1}^\N \to \set{0,1}$ that equals $1$ if and only if the input string does not contain two consecutive $1$s.
    
    There is one technical issue that we must make explicit here. Not every directed network can be used to perform well-defined computation. For instance, if a network involves a directed cycle of nodes, such as $v_0 \leftarrow v_1 \leftarrow v_2 \leftarrow \cdots \leftarrow v_0$, then the computational process wouldn't even be able to start. More generally, a Boolean circuit must be \emph{well-founded}, meaning that it must not contain an infinite descending sequence of nodes such as $v_0 \leftarrow v_1 \leftarrow v_2 \leftarrow v_3 \leftarrow \cdots$. It is not hard to show that well-foundedness is the only necessary requirement: any well-founded network of gates implements a well-defined function.
    
    An important part of computational complexity theory is \emph{circuit complexity}, which studies how large a \ep{finite} circuit needs to be to compute a given function $f \colon \set{0,1}^n \to \set{0,1}^m$. In descriptive set theory we are similarly interested in functions that can be computed by \emph{countable} circuits:
    
    \begin{defn}[Borel subsets of $\set{0,1}^\N$]\label{defn:Borel}
        A subset $A \subseteq \set{0,1}^\N$ is \emph{Borel} if its characteristic function can be computed by a countable Boolean circuit. We let $\mathfrak{B}(\set{0,1}^\N)$ denote the family of all Borel subsets of $\set{0,1}^\N$.
    \end{defn}
    
    While Definition~\ref{defn:Borel} applies to subsets of $\set{0,1}^\N$, it can naturally be extended to sets of other types, as long as their members can be somehow encoded by infinite bit strings. For example, a countable graph $G$ with vertex set $V = \set{v_0, v_1, v_2, \ldots}$ can be represented by its \emph{adjacency matrix} $M_G$, i.e., a countable table of $0$s and $1$s whose entry in row $i$ and column $j$ is $1$ if and only if $v_i$ and $v_j$ are adjacent in $G$. We can then call a set $A$ of countable graphs \emph{Borel} if there is a countable Boolean circuit that decides, given the matrix $M_G$, whether $G$ is in $A$ or not. For instance, the following sets of countable graphs are Borel:
    \begin{align*}
        \set{G \,:\, \text{$G$ is connected}}, \qquad \set{G \,:\, \text{$G$ is bipartite}}, \qquad \set{G \,:\, \text{$G$ is $3$-colorable}}.
    \end{align*}
    \ep{The last one may be a bit surprising, since deciding whether a finite graph is $3$-colorable is believed to be a computationally difficult problem. The difficulty miraculously disappears in the countable world, however.} On the other hand, the following set can be shown to \emph{not} be Borel:
    \[
        \set{G \,:\, \text{$G$ has an infinite clique}}.
    \]
    In fact, this set is \emph{complete analytic}, which is a notion somewhat analogous to \textsf{NP}-completeness from computational complexity, with the added benefit that in descriptive set theory it is a theorem \ep{due to Suslin from 1917; see \cite[\S26]{KechrisDST}} that complete analytic sets cannot be Borel. In general, \emph{most} subsets of $\set{0,1}^\N$ are not Borel: $\set{0,1}^\N$ has $2^{2^{\aleph_0}}$ subsets but only $2^{\aleph_0}$ of them are Borel \ep{this result follows by enumerating all countable Boolean circuits}.
    
    \subsubsection{Borel sets in Polish spaces}\label{subsec:polish}
    
    A useful \ep{and more classical} perspective on Borel sets is provided by topological considerations. We can define the distance between two infinite bit strings $x$, $y \in \set{0,1}^\N$ by the formula
    \[
        \mathrm{dist}(x,y) \,\defeq\, \sum_{n = 0}^\infty \frac{1}{2^{n+1}} |x_n - y_n|.
    \]
    (This can be thought of as a weighted version of the Hamming distance.) The coefficients $1/2^{n+1}$ are chosen so that the series $\sum_{n=0}^\infty 1/2^{n+1}$ converges \ep{and hence the metric $\mathrm{dist}$ is bounded, in this case by $1$}, but otherwise they are arbitrary. In particular, using a different convergent series with positive terms would yield a different metric, but the resulting topology on the space $\set{0,1}^\N$ would be the same---namely, it is the product topology arising from viewing $\set{0,1}^\N$ as the product of countably many copies of the discrete space $\set{0,1}$. Actually, it is not hard to explicitly describe this topology without referring to the metric: a set $A \subseteq \set{0,1}^\N$ is open if and only if for every sequence $x = (x_0, x_1, x_2, \ldots) \in A$, the membership of $x$ in $A$ can be confirmed by looking at only finitely many entries of $x$. More formally, $A$ is open if for each $x \in A$, there is some $n \in \N$ such that every infinite bit string whose first $n$ entries are $(x_0, x_1, \ldots, x_{n-1})$ is in $A$.
    
    Now, it turns out that Borel subsets of $\set{0,1}^\N$ can be described in purely topological terms and without any reference to Boolean circuits as follows: $\mathfrak{B}(\set{0,1}^\N)$ is the smallest family of subsets of $\set{0,1}^\N$ that includes all open sets and is closed under countable Boolean operations \ep{i.e., complements, countable unions, and countable intersections}. And this characterization can be taken as the definition of Borel subsets of any topological space:
    
    \begin{defn}[Borel sets in topological spaces]\label{defn:Borel_general}
        Let $X$ be a topological space. We let $\mathfrak{B}(X)$ be the smallest family of subsets of $X$ that includes all open sets and is closed under countable Boolean operations. A subset $A \subseteq X$ is \emph{Borel} if it is a member of $\mathfrak{B}(X)$.
    \end{defn}
    
    Although Definition~\ref{defn:Borel_general} makes sense for an arbitrary topological space $X$, descriptive set theory is mostly concerned with so-called Polish spaces. Formally, a topological space is \emph{Polish} if it is second-countable \ep{i.e., it has a countable basis} and completely metrizable \ep{i.e., the topology is generated by a complete metric}. A particularly compelling reason for focusing on Polish spaces is that the points of a Polish space can be encoded by infinite bit strings in a ``well-behaved'' way. This statement is made precise by the following remarkable theorem:
    
    \begin{theo}[{Borel Isomorphism Theorem \cite[Theorem 15.6]{KechrisDST}}]
        If $X$ is an uncountable Polish space, then there is a bijection $\mathbf{code} \colon X \to \set{0,1}^\N$ such that a set $A \subseteq X$ is Borel in $X$ if and only if the set $\set{\mathbf{code}(x) \,:\, x \in A}$ is Borel in $\set{0,1}^\N$.
    \end{theo}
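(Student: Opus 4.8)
The plan is to prove the stronger statement that any two uncountable Polish spaces are Borel isomorphic; in particular $X$ is Borel isomorphic to $\set{0,1}^\N$, and the isomorphism — a Borel bijection whose inverse is also Borel — is exactly the map $\mathbf{code}$ we seek, since such a map carries Borel sets to Borel sets in both directions, which is precisely the stated property. The whole argument rests on a Borel analogue of the Cantor--Schröder--Bernstein theorem: \emph{if there exist Borel injections $f \colon X \to \set{0,1}^\N$ and $g \colon \set{0,1}^\N \to X$, then $X$ and $\set{0,1}^\N$ are Borel isomorphic.} Granting this, it remains only to construct the two injections.

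For the injection $f \colon X \to \set{0,1}^\N$, I would first embed $X$ topologically into the Hilbert cube $[0,1]^\N$: fixing a countable basis $\set{U_n}$ of $X$ together with a compatible metric bounded by $1$, the map $x \mapsto (\dist(x, X \setminus U_n))_n$ separates points and is a homeomorphism onto its image. Composing with a Borel injection $[0,1]^\N \to \set{0,1}^\N$ — obtained by replacing each coordinate with its binary expansion, which is injective once we fix, for each dyadic rational, one of its two expansions, and then folding the countably many coordinates into one via a pairing of $\N \times \N$ with $\N$ — yields the desired Borel injection $f$.

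For the injection $g \colon \set{0,1}^\N \to X$, I would invoke the perfect set theorem: every uncountable Polish space contains a homeomorphic copy of the Cantor space $\set{0,1}^\N$. One builds this copy directly as a Cantor scheme, assigning to each finite binary string a nonempty open set subject to the usual nesting, disjointness, and vanishing-diameter conditions, and reading off the unique point lying in every open set along a given infinite branch; completeness of the metric guarantees that these points exist. The resulting embedding is a continuous, hence Borel, injection $g$.

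The main obstacle is hidden inside the Borel Schröder--Bernstein step, and it is the \emph{Lusin--Souslin theorem}: the image of a Borel set under an injective Borel map is again Borel, and consequently the inverse of a Borel injection defined on a Borel set is itself Borel. This is the one genuinely nontrivial ingredient — its proof passes through the theory of analytic sets and the separation theorem — while everything else is bookkeeping. Once it is available, the classical Schröder--Bernstein construction goes through verbatim: setting $A_0 \defeq X \setminus \im(g)$, $A_{k+1} \defeq g(f(A_k))$, and $A \defeq \bigcup_k A_k$, the map sending $x$ to $f(x)$ for $x \in A$ and to $g^{-1}(x)$ for $x \notin A$ is a bijection; since $f$, $g$, $g^{-1}$ and all the sets $A_k$ are Borel, so is this bijection, and its inverse is Borel as well, completing the proof.
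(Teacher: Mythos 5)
The paper does not actually prove this statement—it is quoted as known background from the cited reference \cite[Theorem 15.6]{KechrisDST}—so the comparison can only be with the standard proof given there. Your argument is correct and is essentially that standard proof: a Borel Cantor--Schr\"oder--Bernstein argument whose one genuinely nontrivial ingredient is, as you correctly identify, the Lusin--Souslin theorem, combined with the two Borel injections obtained from the Hilbert-cube embedding followed by binary coding, and from the perfect set theorem (whose Cantor-scheme construction, as a small gloss, requires first passing to the perfect kernel via Cantor--Bendixson so that every open set in sight can be split in two).
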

    
    In other words, the computational definition of Borel subsets of $\set{0,1}^\N$ given in Definition~\ref{defn:Borel} can also be used to identify, via an appropriate coding, Borel sets in any uncountable Polish space. This is a powerful observation because many natural examples of topological spaces are Polish, for instance $\R$ and, more generally, $\R^n$ for $n \in \N$, the infinite-dimensional space $\R^\N$, the Baire space $\N^\N$, the unit intervals $[0,1]$, $(0,1)$, and $[0,1)$, the unit circle $\mathbb{S}^1$ and, more generally, any second-countable topological manifold, all compact metric spaces, all separable Banach spaces, etc. Furthermore, it is possible to parameterize various classes of mathematical structures by points in suitably defined Polish spaces. We have already seen how to use adjacency matrices to form a Polish space of countable graphs. In a similar vein, one can assemble, e.g., a Polish space of countable groups. There are also natural Polish spaces of continuous functions, measure-preserving transformations, separable Banach spaces, etc. It is even possible to define a Polish space of all Polish spaces!\footnote{For the interested reader, we sketch the construction of the Polish space of Polish spaces. First, it turns out that every Polish space is homeomorphic to a closed subset of $\R^\N$. Now, let $(U_n)_{n \in \N}$ be a countable basis for the topology of $\R^\N$. To each closed set $X \subseteq \R^\N$, we assign a code $\mathbf{code}_X \in \set{0,1}^\N$ as follows: $\mathbf{code}_X(n) = 1$ if and only if $X \cap U_n \neq \0$. Let $\mathcal{X} \subseteq \set{0,1}^\N$ be the set of all codes of closed subsets of $\R^\N$. One can show that $\mathcal{X}$ is Polish in the relative topology inherited from $\set{0,1}^\N$. Thus, all Polish spaces are parameterized by the points of the Polish space $\mathcal{X}$. 
    }
    
    \subsubsection{Other classes of sets in descriptive set theory}
    
    In addition to Borel sets, there are various other classes of sets that play an important role in descriptive set theory. In this section we briefly introduce three such classes.
    
    Let us temporarily return to the space $\set{0,1}^\N$ of infinite bit strings. Recall that a set $A \subseteq \set{0,1}^\N$ is Borel if the membership in $A$ can be decided by a countable Boolean circuit. Sometimes it makes sense to inquire whether a set is not just Borel but \emph{open}. As mentioned in \S\ref{subsec:polish}, a set $A \subseteq \set{0,1}^\N$ is open if for every sequence $x \in A$, the membership of $x$ in $A$ can be confirmed by looking at only finitely many bits of $x$. 
    Of course, we can also investigate open sets in any other Polish space.
    
    On the other hand, sometimes a set we are working with may not be Borel and yet be ``almost'' Borel in a certain suitable sense. For instance, a set $A \subseteq \set{0,1}^\N$ is \emph{measurable} if there is a countable Boolean circuit that correctly decides the membership in $A$ for a \emph{random} input point $x \in \set{0,1}^\N$. More precisely, $A$ is measurable if there is a countable Boolean circuit $C$ with the following property. Let $\mathbf{1}_A$ denote the characteristic function of $A$. Sample a sequence $x= (x_0, x_1, x_2, \ldots) \in \set{0,1}^\N$ randomly by making each $x_i$ be $0$ or $1$ independently with probability $1/2$. Then $C(x) = \mathbf{1}_A(x)$ with probability $1$; in other words, $C$ may fail to correctly determine $x$'s membership in $A$, but the probability of failure is $0$. We can similarly study measurable sets in any Polish space $X$, provided that it is equipped with a Borel probability measure $\mu$.\footnote{We should point out that the class of measurable sets varies with the choice of the measure $\mu$. For simplicity, we shall assume throughout this article that whenever we speak of measurable sets, it is with respect to some implicitly fixed probability measure.} In many applications, sets of measure $0$ may be safely ignored, and thus measurability is often a ``good enough'' substitute for Borelness.
    
    Finally, we also often consider the so-called {Baire-measurable} sets. The notion of Baire-measurability is analogous to measurability, but it is typically easier to work with. Furthermore, it is defined purely topologically, without reference to a measure or any other additional structure. The role of sets of measure $0$ is played by the so-called \emph{meager} sets, i.e., countable unions of nowhere dense sets. We think of meager sets as ``topologically negligible.'' A set $A$ is \emph{Baire-measurable} if there is a Borel set $A'$ such that the symmetric difference $A \symdif A'$ is meager---informally, $A$ and $A'$ are ``topologically almost equal.'' The area studying meager and Baire-measurable sets is called \emph{Baire category theory} \ep{owing to the older term ``sets of first category'' for meager sets}. For a detailed comparison of measure and Baire category, see the excellent book \cite{Oxtoby} by Oxtoby.
    
    \subsection{Borel graphs and their combinatorics}\label{subsec:Borelgraphs}
    
    As mentioned earlier, descriptive combinatorics investigates classical combinatorial problems---such as graph coloring, matching, etc.---from the perspective of descriptive set theory. The central notion here is that of a Borel graph:
    
    \begin{defn}[Borel graphs]
        A \emph{Borel graph} is a graph $G$ whose vertex set $V(G)$ is a Polish space and whose edge set $E(G)$ is a Borel subset of $V(G) \times V(G)$.
    \end{defn}
    
    In this section, we describe some examples of Borel graphs and highlight a few of their properties that are of interest in descriptive combinatorics.
    
    \subsubsection{The hypercube}\label{subsec:cube}
    
    \begin{figure}[t]

    \begin{center}    
    
    \begin{tikzpicture}
    
    \begin{scope}[xshift=1cm]
        \filldraw (0,0.5) circle (1pt);
        \filldraw (0,-0.5) circle (1pt);
        \draw (0,-0.5) -- (0,0.5);
        \node[anchor=north] at (0,-0.5) {$0$};
        \node[anchor=south] at (0,0.5) {$1$};
    \end{scope}
    
    \begin{scope}[xshift=4cm]
        \filldraw (-0.5,0.5) circle (1pt);
        \filldraw (-0.5,-0.5) circle (1pt);
        \draw (-0.5,-0.5) -- (-0.5,0.5);
        \filldraw (0.5,0.5) circle (1pt);
        \filldraw (0.5,-0.5) circle (1pt);
        \draw (0.5,-0.5) -- (0.5,0.5);
        \draw (-0.5,-0.5) -- (0.5,-0.5);
        \draw (-0.5,0.5) -- (0.5,0.5);
        
        \node[anchor=north] at (-0.5,-0.5) {$00$};
        \node[anchor=north] at (0.5,-0.5) {$01$};
        \node[anchor=south] at (-0.5,0.5) {$10$};
        \node[anchor=south] at (0.5,0.5) {$11$};
    \end{scope}
    
    \begin{scope}[xshift=8cm,scale=1.5]
        \filldraw (-0.7,0.3) circle (1pt);
        \filldraw (-0.7,-0.7) circle (1pt);
        \draw (-0.7,-0.7) -- (-0.7,0.3);
        \filldraw (0.3,0.3) circle (1pt);
        \filldraw (0.3,-0.7) circle (1pt);
        \draw (0.3,-0.7) -- (0.3,0.3);
        \draw (-0.7,-0.7) -- (0.3,-0.7);
        \draw (-0.7,0.3) -- (0.3,0.3);
        
        \filldraw (-0.3,0.7) circle (1pt);
        \filldraw (-0.3,-0.3) circle (1pt);
        \draw (-0.3,-0.3) -- (-0.3,0.7);
        \filldraw (0.7,0.7) circle (1pt);
        \filldraw (0.7,-0.3) circle (1pt);
        \draw (0.7,-0.3) -- (0.7,0.7);
        \draw (-0.3,-0.3) -- (0.7,-0.3);
        \draw (-0.3,0.7) -- (0.7,0.7);
        
        \draw (-0.7,-0.7) -- (-0.3,-0.3);
        \draw (-0.7,0.3) -- (-0.3,0.7);
        \draw (0.3,-0.7) -- (0.7,-0.3);
        \draw (0.3,0.3) -- (0.7,0.7);
        
        \node[anchor=north] at (-0.7,-0.7) {$000$};
		\node[anchor=north] at (0.3,-0.7) {$010$};
		\node[anchor=east] at (-0.7,0.3) {$100$};
			\node[anchor=south] at (-0.3,0.7) {$101$};
			\node[anchor=180+40] at (-0.3,-0.3) {$001$};
			\node[anchor=west] at (0.7,-0.3) {$011$};
			\node[anchor=35] at (0.3,0.3) {$110$};
			\node[anchor=south] at (0.7,0.7) {$111$};
    \end{scope}
        
    \end{tikzpicture}

    \caption{$1$-, $2$-, and $3$-dimensional hypercubes. In contrast to the infinite-dimensional hypercube, these graphs are connected. \label{fig:hyper}}
    \end{center}
    \end{figure}
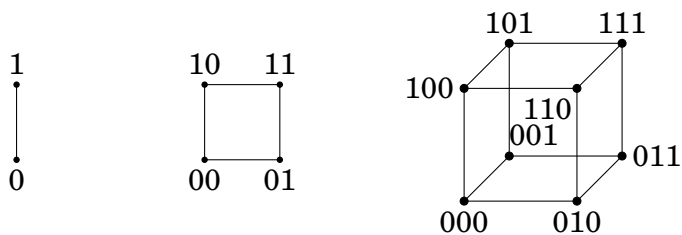
    
    For our first example, let $\mathbb{H}$ be the graph with vertex set $\set{0,1}^\N$ in which two bit strings $x = (x_0, x_1, x_2, \ldots)$, $y = (y_0, y_1, y_2, \ldots) \in \set{0,1}^\N$ are adjacent if and only if they differ in exactly one coordinate, i.e., if there is a unique index $n \in \N$ such that $x_n \neq y_n$. We call $\mathbb{H}$ the \emph{infinite-dimensional hypercube} \ep{it can be viewed as the inverse limit of finite-dimensional hypercubes, the first few of which are shown in Fig.~\ref{fig:hyper}}. The graph $\mathbb{H}$ is Borel, since it is easy to construct a countable Boolean circuit which, given two infinite bit strings $x$ and $y$, determines if they are adjacent in $\mathbb{H}$. 
    
    Note that the vertex set of $\mathbb{H}$ has cardinality $2^{\aleph_0}$. This is typical for graphs studied in descriptive combinatorics, since their vertex sets usually are uncountable Polish spaces.
    
    Next, we observe that, somewhat surprisingly and in contrast to finite-dimensional Boolean cubes, the graph $\mathbb{H}$ is disconnected. To see this, recall that the \emph{connected component} of a vertex $x$ in a graph $G$ is the smallest set $[x]_G \subseteq V(G)$ that contains $x$ and such that there are no edges joining $[x]_G$ to $V(G) \setminus [x]_G$. Equivalently, $[x]_G$ is the set of all vertices reachable from $x$ in $G$ by a {finite} path. A graph is {connected} if it has a single connected component. Now take any vertex $x \in \set{0,1}^\N$ in $\mathbb{H}$. 
        A moment's thought reveals that the connected component $[x]_\mathbb{H}$ is the set of all sequences $y \in \set{0,1}^\N$ that differ from $x$ in only finitely many coordinates. For instance, if $x = (0,0,0,\ldots)$, then $[x]_\mathbb{H}$ is the set of all bit strings with finitely many $1$s. It follows that every connected component of $\mathbb{H}$ is countable, which implies that $\mathbb{H}$ has $2^{\aleph_0}$-many components. This is also a typical feature of graphs studied in descriptive combinatorics.

    One of the most important parameters studied in graph theory is the chromatic number of a graph:
    
    \begin{defn}[Independent sets and chromatic numbers]
        Let $G$ be a graph. A subset $I \subseteq V(G)$ is \emph{$G$-independent} if no edge of $G$ joins two vertices in $I$. The \emph{chromatic number} $\chi(G)$ of $G$ is the smallest cardinal $\kappa$ such that $V(G)$ can be partitioned into $\kappa$-many $G$-independent sets.
    \end{defn}
    
    Let us compute the chromatic number of $\mathbb{H}$:
    
    \begin{prop}\label{prop:cube2}
        We have $\chi(\mathbb{H}) = 2$.
    \end{prop}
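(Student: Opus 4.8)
The lower bound is immediate: since $\mathbb{H}$ contains at least one edge, its vertices cannot be covered by a single independent set, so $\chi(\mathbb{H}) \geq 2$. The substance of the proposition is thus the upper bound, i.e., the existence of a proper $2$-coloring, equivalently a function $c \colon \set{0,1}^\N \to \set{0,1}$ with $c(x) \neq c(y)$ whenever $x$ and $y$ are adjacent in $\mathbb{H}$. The naive attempt is to let $c(x)$ record the parity of the number of $1$s in $x$; since adjacent strings differ in exactly one coordinate, this would indeed flip $c$ across every edge. The obstacle---and essentially the whole point of the example---is that a string may contain infinitely many $1$s, so this ``global parity'' is simply undefined. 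I would circumvent this using the Axiom of Choice.

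The cleanest version uses the component structure established just before the proposition. Recall that $[x]_\mathbb{H}$ consists of all $y$ differing from $x$ in only finitely many coordinates, and that these components partition $\set{0,1}^\N$. Using choice, I would select one representative $b_C$ from each component $C$. For $y \in C$, the Hamming distance $d(y, b_C)$---the number of coordinates in which $y$ and $b_C$ differ---is then \emph{finite}, so I may set $c(y) \defeq d(y, b_C) \bmod 2$. Two adjacent vertices lie in a common component and differ in exactly one coordinate, so their distances to the shared representative differ by one, and hence $c$ assigns them different values. This yields the desired $2$-coloring, completing the proof.

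An equivalent and perhaps more memorable formulation is linear-algebraic: view $\set{0,1}^\N$ as a vector space over $\F_2$, so that the edges of $\mathbb{H}$ join $x$ to $x + e_n$, where $e_n$ is the $n$-th unit vector. It suffices to produce a linear functional $\ell \colon \set{0,1}^\N \to \F_2$ with $\ell(e_n) = 1$ for every $n$, since then $\ell(x + e_n) = \ell(x) + 1 \neq \ell(x)$ and $c \defeq \ell$ is a proper $2$-coloring. As the $e_n$ are linearly independent, I can extend $\set{e_n : n \in \N}$ to a Hamel basis and declare $\ell$ to be $1$ on each $e_n$ and $0$ on the remaining basis vectors; the existence of such a basis is exactly where the Axiom of Choice enters. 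The one point worth flagging is that in both approaches the coloring is radically non-canonical: the choices cannot be made continuously, and---as the later sections will stress---the resulting $c$ is provably neither Borel nor measurable, which is the phenomenon this innocuous-looking proposition is meant to foreshadow.
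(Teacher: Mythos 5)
Your proposal is correct and its main argument is exactly the paper's proof: choose a representative of each connected component via the Axiom of Choice and color each vertex by the parity of its (finite) Hamming distance to its component's representative. The Hamel-basis reformulation you add is a pleasant equivalent repackaging of the same choice-based idea, not a genuinely different route, and your closing remark that any such $2$-coloring is necessarily non-Borel and non-measurable is precisely what Proposition~\ref{prop:cube} establishes.
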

    \begin{proof}
        Clearly, $\chi(\mathbb{H}) \geq 2$. To prove that $\chi(\mathbb{H}) \leq 2$, we need to partition $\set{0,1}^\N$ into two $\mathbb{H}$-independent sets. To this end, choose an arbitrary representative from every connected component of $\mathbb{H}$. For $x \in \set{0,1}^\N$, let $\mathrm{rep}(x)$ be the representative from $[x]_\mathbb{H}$. Since $x$ and $\mathrm{rep}(x)$ belong to the same component of $\mathbb{H}$, they differ in finitely many coordinates. Thus, we can let $\delta(x)$ be the number of coordinates where $x$ and $\mathrm{rep}(x)$ differ and define 
        \begin{align*}
            A \,&\defeq\, \set{x \in \set{0,1}^\N \,:\, \text{$\delta(x)$ is even}},\\
            B \,&\defeq\, \set{x \in \set{0,1}^\N \,:\, \text{$\delta(x)$ is odd}}.
        \end{align*}
        If $x$ and $y$ are neighbors in $\mathbb{H}$, then $\mathrm{rep}(x) = \mathrm{rep}(y)$, and hence $|\delta(x) - \delta(y)| = 1$. Therefore, the sets $A$ and $B$ are $\mathbb{H}$-independent and partition $\set{0,1}^\N$, as desired.
    \end{proof}
    
    Since we know that $\chi(\mathbb{H}) = 2$, it makes sense to ask the following question:
    \begin{quote}
        \textsl{Can the set $\set{0,1}^\N$ be partitioned into two {Borel} $\mathbb{H}$-independent sets?}
    \end{quote}
    This is a special case of the following general problem:
    
    \begin{defn}[Borel chromatic numbers]
        Let $G$ be a Borel graph. The \emph{Borel chromatic number} $\chi_\mathsf{B}(G)$ of $G$ is the smallest $\kappa \in \set{0,1,2,\ldots, \aleph_0, 2^{\aleph_0}}$ such that $V(G)$ can be partitioned into $\kappa$-many Borel $G$-independent sets.\footnote{Note that, by definition, if $\chi_\mathsf{B}(G)$ is uncountable, then $\chi_\mathsf{B}(G) = 2^{\aleph_0}$. This convention is motivated by the fact that every uncountable Polish space has cardinality $2^{\aleph_0}$. However, it may still be meaningful to ask whether $V(G)$ can be partitioned into $\kappa$-many Borel $G$-independent sets for some $\aleph_0 < \kappa < 2^{\aleph_0}$; see \cite{Geschke} for further discussion. This issue will not play a major role in this article, since we shall be mostly interested in the case when $\chi_\mathsf{B}(G)$ is finite.} The \emph{measurable} and \emph{Baire-measurable chromatic numbers} $\chi_\mathsf{M}(G)$ and $\chi_\mathsf{BM}(G)$ are defined analogously.
    \end{defn}
    
    It turns out that, although the chromatic number of $\mathbb{H}$ is $2$, its Borel chromatic number is uncountable!
    
    \begin{prop}[{\cite[\S4.3]{KechrisMarks}}]\label{prop:cube}
        The set $\set{0,1}^\N$ cannot be partitioned into countably many Borel $\mathbb{H}$-independent sets. In other words, $\chi_{\mathsf{B}}(\mathbb{H}) = 2^{\aleph_0}$.
    \end{prop}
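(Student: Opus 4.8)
The plan is to prove the formally stronger statement that even the \emph{measurable} chromatic number of $\mathbb{H}$ is uncountable; since every Borel set is measurable, a partition of $\set{0,1}^\N$ into countably many Borel $\mathbb{H}$-independent sets would in particular be such a partition into measurable sets, so this suffices. Throughout I equip $\set{0,1}^\N$ with the uniform product measure $\mu$, under which each coordinate is $0$ or $1$ independently with probability $1/2$. The engine of the argument is the family of \emph{coordinate flips}: for each $k \in \N$, let $\sigma_k \colon \set{0,1}^\N \to \set{0,1}^\N$ be the map that flips the $k$-th coordinate and leaves all others unchanged. Two facts about $\sigma_k$ are immediate and will do all the work: first, $\sigma_k$ is a $\mu$-preserving bijection (flipping a fair coin preserves its distribution); second, $x$ and $\sigma_k(x)$ differ in exactly the coordinate $k$, hence are adjacent in $\mathbb{H}$.

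First I would record the consequence of independence: if $A \subseteq \set{0,1}^\N$ is $\mathbb{H}$-independent, then $A$ and $\sigma_k(A)$ are disjoint for every $k$. Indeed, a common point $y \in A \cap \sigma_k(A)$ would force both $y$ and $\sigma_k(y)$ into $A$, yet these are adjacent in $\mathbb{H}$. Now suppose, toward a contradiction, that $\set{0,1}^\N = \bigcup_{n} A_n$ where each $A_n$ is a \emph{measurable} $\mathbb{H}$-independent set. By countable additivity $1 = \mu(\set{0,1}^\N) \leq \sum_n \mu(A_n)$, so at least one $A_n$, which I call $A$, has $\mu(A) > 0$.

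The heart of the proof is a \emph{density} step. I claim that any measurable set $A$ with $\mu(A) > 0$ has a basic clopen cylinder $C$ --- a set of the form $\set{x : x_i = a_i \text{ for all } i < m}$ for some $m$ and fixed bits $a_0, \dots, a_{m-1}$ --- in which its relative density exceeds one half, i.e.\ $\mu(A \cap C) > \tfrac12 \mu(C)$. This follows by an elementary covering argument: writing an open set $U \supseteq A$ with $\mu(U)$ close to $\mu(A)$ as a disjoint union of cylinders $C_i$, if every cylinder satisfied $\mu(A \cap C) \leq \tfrac12 \mu(C)$ then summing over the $C_i$ would give $\mu(A) \leq \tfrac12 \mu(U)$, and letting $\mu(U) \downarrow \mu(A)$ (by outer regularity) forces $\mu(A) = 0$. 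With such a $C$ in hand, pick any index $k \geq m$, so that $k$ is not among the coordinates constrained by $C$. Then $\sigma_k$ maps $C$ onto itself, whence $\sigma_k(A \cap C) = \sigma_k(A) \cap C$ and, since $\sigma_k$ preserves $\mu$, we get $\mu(\sigma_k(A) \cap C) = \mu(A \cap C) > \tfrac12 \mu(C)$. But $A \cap C$ and $\sigma_k(A) \cap C$ are disjoint subsets of $C$, each of measure exceeding $\tfrac12 \mu(C)$, so their union has measure greater than $\mu(C)$ --- an impossibility. This contradiction finishes the argument, and by the convention on uncountable Borel chromatic numbers we conclude $\chi_{\mathsf B}(\mathbb{H}) = 2^{\aleph_0}$.

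The step I expect to be the main obstacle --- or at least the one demanding care --- is the density claim, since everything else is bookkeeping about flips. It is worth noting that the whole scheme has a purely topological twin: replacing $\mu$ by Baire category, one uses that a non-meager Borel set is comeager in some cylinder $C$, applies a flip $\sigma_k$ fixing $C$ (a homeomorphism, hence meager-preserving) to see that $\sigma_k(A)$ is also comeager in $C$, and then extracts a point of $C$ lying in both $A$ and $\sigma_k(A)$. This variant shows simultaneously that $\chi_{\mathsf{BM}}(\mathbb{H})$ is uncountable, and either route establishes the proposition.
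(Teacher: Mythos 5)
Your proof is correct and takes essentially the same approach as the paper: immediately after the proposition, the paper sketches precisely this strategy---show that every measurable $\mathbb{H}$-independent set is null (and every Baire-measurable one meager), then use countable additivity to conclude that countably many such sets cannot cover $\set{0,1}^\N$. The only thing you add is the detailed verification of the nullity step, via coordinate flips together with the cylinder-density argument, which the paper delegates to the cited reference \cite{KechrisMarks}.
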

    
    Actually, we even have $\chi_{\mathsf{M}}(\mathbb{H}) = \chi_\mathsf{BM}(\mathbb{H}) = 2^{\aleph_0}$ \ep{and, of course, both $\chi_\mathsf{M}(\mathbb{H})$ and $\chi_\mathsf{BM}(\mathbb{H})$ are lower bounds for $\chi_\mathsf{B}(\mathbb{H})$}. To prove this, one shows that every measurable $\mathbb{H}$\=/independent set must have measure $0$, while every Baire-measurable $\mathbb{H}$-independent set must be meager. Since countable unions of measure-$0$ or meager sets are still measure-$0$ or meager respectively, it is impossible to cover the entire space $\set{0,1}^\N$ by countably many measurable or Baire-measurable $\mathbb{H}$-independent sets.
    
    One interpretation of Proposition~\ref{prop:cube} is that there is no ``explicit'' partition of the vertex set of $\mathbb{H}$ into two---or even countably many---$\mathbb{H}$-independent sets. What makes the construction in the proof of Proposition~\ref{prop:cube2} ``inexplicit'' is the very first step, namely choosing a single vertex in each connected component of $\mathbb{H}$. This step implicitly invokes the so-called \emph{Axiom of Choice}:
    
    \begin{axiom}[Choice]
        If $\mathcal{F}$ is a family of nonempty sets, then there is a way to pick one element from each set in $\mathcal{F}$. Formally, there is a function $\mathbf{choice}$ that assigns to each $A \in \mathcal{F}$ an element $\mathbf{choice}(A) \in A$.
    \end{axiom}
    
    The Axiom of Choice is part of the axiom system \textsf{ZFC} \ep{\emph{Zermelo--Fraenkel set theory with the Axiom of Choice}} and plays a crucial role in the foundations of mathematics. While most other axioms of \textsf{ZFC} assert the existence of concrete sets, such as the powerset $\mathcal{P}(A)$ of a given set $A$, the Axiom of Choice postulates the existence of a choice function without actually describing \emph{how} the choice is to be made. In this sense, the Axiom of Choice is non-constructive---so much so that in the past it was viewed with deep suspicion by many mathematicians. After all, how can we assert that something exists without being able to provide even a single example? 
     Although by now the controversy around the Axiom of Choice has largely died down and most mathematicians use it without reservation, it is still true that the Axiom of Choice---and hence our proof of Proposition~\ref{prop:cube2} based on it---is inherently non-constructive. From this perspective, Proposition~\ref{prop:cube} says that it is impossible to come up with an alternative, constructive argument---at least if by ``constructive'' we mean ``Borel.''\footnote{This is not the only way in which the Axiom of Choice affects chromatic numbers of graphs. For instance, the following remarkable fact was established by Galvin and Komj\'{a}th \cite{GK}: the statement that the chromatic number is well-defined for every graph $G$ is \emph{equivalent} to the Axiom of Choice.}
    
    \subsubsection{Translation and rotation}
    
    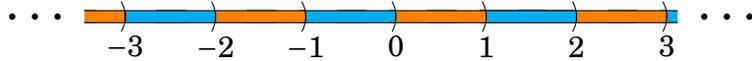
\begin{figure}[t]
		\centering	
		\begin{tikzpicture}[scale=0.8]
			
			

			\filldraw [fill=cyan] (8.9,-0.1) rectangle (10,0.1);
			\begin{scope}
			\clip (10,0) circle (0.5);
			\filldraw [fill=cyan] (9,-0.1) rectangle (10.5,0.1);
			\end{scope}
			
			\filldraw [fill=orange] (5.9,-0.1) rectangle (8.5,0.1);
			\begin{scope}
			\clip (8.5,0) circle (0.5);
			\filldraw [fill=orange] (6,-0.1) rectangle (9,0.1);
			\end{scope}
			
			\filldraw [fill=cyan] (4.4+1.5,-0.1) rectangle (5.5+1.5,0.1);
			\begin{scope}
			\clip (5.5+1.5,0) circle (0.5);
			\filldraw [fill=cyan] (4.5+1.5,-0.1) rectangle (6+1.5,0.1);
			\end{scope}
			
			\filldraw [fill=orange] (2.9+1.5,-0.1) rectangle (4+1.5,0.1);
			\begin{scope}
			\clip (4+1.5,0) circle (0.5);
			\filldraw [fill=orange] (3+1.5,-0.1) rectangle (4.5+1.5,0.1);
			\end{scope}
			
			\filldraw [fill=cyan] (1.4+1.5,-0.1) rectangle (2.5+1.5,0.1);
			\begin{scope}
			\clip (2.5+1.5,0) circle (0.5);
			\filldraw [fill=cyan] (1.5+1.5,-0.1) rectangle (3+1.5,0.1);
			\end{scope}
			
			\filldraw [fill=orange] (0+1.5,-0.1) rectangle (1+1.5,0.1);
			\begin{scope}
			\clip (1+1.5,0) circle (0.5);
			\filldraw [fill=orange] (0+1.5,-0.1) rectangle (1.5+1.5,0.1);
			\end{scope}
			
			\filldraw [fill=cyan] (0,-0.1) rectangle (1,0.1);
			\begin{scope}
			\clip (1,0) circle (0.5);
			\filldraw [fill=cyan] (0,-0.1) rectangle (1.5,0.1);
			\end{scope}
			
			\filldraw [fill=orange] (0-1.5,-0.1) rectangle (1-1.5,0.1);
			\begin{scope}
			\clip (1-1.5,0) circle (0.5);
			\filldraw [fill=orange] (0-1.5,-0.1) rectangle (1.5-1.5,0.1);
			\end{scope}
			
			\draw (1-1.5,0) ++(30:0.5) arc (30:-30:0.5);
			\draw (1,0) ++(30:0.5) arc (30:-30:0.5);
			\draw (2.5,0) ++(30:0.5) arc (30:-30:0.5);
			\draw (4,0) ++(30:0.5) arc (30:-30:0.5);
			\draw (5.5,0) ++(30:0.5) arc (30:-30:0.5);
			\draw (7,0) ++(30:0.5) arc (30:-30:0.5);
			\draw (8.5,0) ++(30:0.5) arc (30:-30:0.5);
			\draw (10,0) ++(30:0.5) arc (30:-30:0.5);
			
			\node[fill=white, shape=circle] at (0-1.5, 0) {\LARGE\,\ldots};
			\node[fill=white, shape=circle] at (10, 0) {\LARGE\,\ldots};
			
			\node at (0, -0.5) {$-3$};
			\node at (1.5, -0.5) {$-2$};
			\node at (3, -0.5) {$-1$};
			\node at (4.5, -0.5) {$0$};
			\node at (4.5+1.5, -0.5) {$1$};
			\node at (4.5+3, -0.5) {$2$};
			\node at (4.5+4.5, -0.5) {$3$};
			
		\end{tikzpicture}
		\caption{A partition of $\R$ into two Borel $G_{\mathrm{tr}}$-independent sets \ep{indicated by the colors}.}\label{fig:Borel2col}
	\end{figure}
    
    In the remainder of this article we shall be concerned with Borel graphs $G$ \emph{of bounded degree}, meaning that there is a natural number $d \in \N$ such that every vertex of $G$ has at most $d$ neighbors. A typical example is the graph $G_{\mathrm{tr}}$ with vertex set $\R$ in which vertices $x$ and $y$ are adjacent if and only if $|x-y| = 1$. In other words, each $x \in \R$ has precisely two neighbors in $G_{\mathrm{tr}}$: $x-1$ and $x+1$. It follows that every connected component of $G_{\mathrm{tr}}$ is a {bi-infinite path} \ep{i.e., a path infinite in both directions}. In particular, just like the hypercube graph $\mathbb{H}$ from \S\ref{subsec:cube}, $G_{\mathrm{tr}}$ has $2^{\aleph_0}$-many countable connected components. \ep{This fact can also be observed directly, since the vertices in the half-open interval $[0,1)$ belong to distinct components.} Clearly, $\chi(G_{\mathrm{tr}}) = 2$. In contrast to the graph $\mathbb{H}$ from \S\ref{subsec:cube} however, we can explicitly describe a partition of $G_{\mathrm{tr}}$ into two independent sets $A$ and $B$: for every integer $n \in \Z$, put the points in the interval $[n,n+1)$ in $A$ if $n$ is even and in $B$ if $n$ is odd \ep{{see Fig.~\ref{fig:Borel2col}}}. This yields
    \[
        A \,=\, \bigcup_{k \in \Z} [2k, 2k+1) \quad \text{and} \quad B \,=\, \bigcup_{k \in \Z} [2k-1, 2k).
    \]
    These sets are Borel, so we in fact have $\chi_\mathsf{B}(G_{\mathrm{tr}}) = 2$.
    
    \begin{figure}[t]
		\centering	
		\begin{tikzpicture}[scale=0.8]
			
			\draw (0,-0.25) -- (0, 0.25);
			
			\draw (0,0) -- (11,0);
			
			\filldraw [fill=olive] (10.4,-0.1) rectangle (10.5,0.1);
			\begin{scope}
			\clip (10.5,0) circle (0.5);
			\filldraw [fill=olive] (10.5,-0.1) rectangle (11.5,0.1);
			\end{scope}
			
			\filldraw [fill=cyan] (8.9,-0.1) rectangle (10,0.1);
			\begin{scope}
			\clip (10,0) circle (0.5);
			\filldraw [fill=cyan] (9,-0.1) rectangle (10.5,0.1);
			\end{scope}
			
			\filldraw [fill=orange] (5.9,-0.1) rectangle (8.5,0.1);
			\begin{scope}
			\clip (8.5,0) circle (0.5);
			\filldraw [fill=orange] (6,-0.1) rectangle (9,0.1);
			\end{scope}
			
			\filldraw [fill=cyan] (4.4,-0.1) rectangle (5.5,0.1);
			\begin{scope}
			\clip (5.5,0) circle (0.5);
			\filldraw [fill=cyan] (4.5,-0.1) rectangle (6,0.1);
			\end{scope}
			
			\filldraw [fill=orange] (2.9,-0.1) rectangle (4,0.1);
			\begin{scope}
			\clip (4,0) circle (0.5);
			\filldraw [fill=orange] (3,-0.1) rectangle (4.5,0.1);
			\end{scope}
			
			\filldraw [fill=cyan] (1.4,-0.1) rectangle (2.5,0.1);
			\begin{scope}
			\clip (2.5,0) circle (0.5);
			\filldraw [fill=cyan] (1.5,-0.1) rectangle (3,0.1);
			\end{scope}
			
			\filldraw [fill=orange] (0,-0.1) rectangle (1,0.1);
			\begin{scope}
			\clip (1,0) circle (0.5);
			\filldraw [fill=orange] (0,-0.1) rectangle (1.5,0.1);
			\end{scope}
			
			\draw (1,0) ++(30:0.5) arc (30:-30:0.5);
			\draw (2.5,0) ++(30:0.5) arc (30:-30:0.5);
			\draw (4,0) ++(30:0.5) arc (30:-30:0.5);
			\draw (5.5,0) ++(30:0.5) arc (30:-30:0.5);
			\draw (8.5,0) ++(30:0.5) arc (30:-30:0.5);
			\draw (10,0) ++(30:0.5) arc (30:-30:0.5);
			\draw (10.5,0) ++(30:0.5) arc (30:-30:0.5);
			
			\node[fill=white, shape=circle] at (7.5, 0) {\LARGE\,\ldots};
			
			\node at (0,-0.5) {$0$};
			\node at (11,-0.5) {$1$};
			\node at (1.5, -0.5) {$\alpha$};
			\node at (3, -0.5) {$2\alpha$};
			\node at (4.5, -0.5) {$3\alpha$};
			
		\end{tikzpicture}
		\caption{A partition of $[0,1)$ into three Borel $G_{\mathrm{rot}}$-independent sets \ep{indicated by the colors}.}\label{fig:Borel3col}
	\end{figure}
    
    Next we fix a number $\alpha \in (0,1) \setminus \Q$ and define a graph $G_{\mathrm{rot}}$ as follows. The vertex set of $G_{\mathrm{rot}}$ is the half-open unit interval $[0,1)$. Two vertices $x$, $y \in [0,1)$ are adjacent in $G_{\mathrm{rot}}$ if and only if $y - x = \pm \alpha \pmod 1$. Again, every vertex $x$ has precisely two neighbors, namely $x + \alpha \pmod 1$ and $x - \alpha \pmod 1$. \ep{Another way of thinking about this graph is by ``wrapping'' the unit interval around a circle, which turns adding $\alpha$ modulo $1$ into rotation by the angle $2 \pi\alpha$.} Since $\alpha$ is irrational, it is not hard to see that $G_{\mathrm{rot}}$ has $2^{\aleph_0}$-many connected components, each of which is a bi-infinite path. In other words, the graphs $G_{\mathrm{rot}}$ and $G_{\mathrm{tr}}$ are isomorphic. In particular, $\chi(G_{\mathrm{rot}}) = 2$. But can we describe a bipartition of $G_{\mathrm{rot}}$ explicitly? An attempt to adapt the construction used for $G_{\mathrm{tr}}$ fails, although it does yield a partition of $G_{\mathrm{rot}}$ into three Borel independent sets \ep{see Fig.~\ref{fig:Borel3col}}. In fact, it turns out that \emph{no explicit bipartition of $G_{\mathrm{rot}}$ exists}. This statement is made precise by the following proposition:
    
    \begin{prop}[{\cite[2]{CMTD}}]
        We have $\chi_\mathsf{B}(G_{\mathrm{rot}}) = 3$. Furthermore, $\chi_{\mathsf{M}}(G_{\mathrm{rot}}) = \chi_{\mathsf{BM}}(G_{\mathrm{rot}}) = 3$ as well.
    \end{prop}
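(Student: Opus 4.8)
The plan is to establish the two bounds $\chi_\mathsf{B}(G_\mathrm{rot}) \leq 3$ and $\chi_\mathsf{M}(G_\mathrm{rot}), \chi_\mathsf{BM}(G_\mathrm{rot}) \geq 3$ and then combine them. The combination is painless: since every Borel set is measurable and has the Baire property, we have $\chi_\mathsf{M}(G_\mathrm{rot}) \leq \chi_\mathsf{B}(G_\mathrm{rot})$ and $\chi_\mathsf{BM}(G_\mathrm{rot}) \leq \chi_\mathsf{B}(G_\mathrm{rot})$, so a single explicit Borel $3$-coloring caps all three parameters at $3$, while ruling out measurable and Baire-measurable $2$-colorings pushes all three up to $3$. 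Throughout I abbreviate the rotation as $T(x) \defeq x + \alpha \bmod 1$, so that the edges of $G_\mathrm{rot}$ are exactly the pairs $\set{x, Tx}$; in particular $T$ equips each orbit with a canonical Borel orientation $x \mapsto Tx$, which I use in the upper bound.

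For $\chi_\mathsf{B}(G_\mathrm{rot}) \leq 3$ I would fix a Borel $G_\mathrm{rot}$-independent set $S \subseteq [0,1)$ that is maximal under inclusion; such a set exists by the standard fact that every bounded-degree Borel graph admits a Borel maximal independent set \ep{and for $G_\mathrm{rot}$ it can even be written down explicitly, yielding the coloring drawn in Fig.~\ref{fig:Borel3col}}. Maximality forces consecutive points of $S$ along each orbit to lie at distance $2$ or $3$, so the forward distance $g(x) \defeq \min \set{n \geq 0 : T^n x \in S}$ is a Borel function taking values in $\set{0,1,2}$. I would then verify that $c \defeq g$ is a proper $3$-coloring: if $g(x) \geq 1$ then $g(Tx) = g(x) - 1 \neq g(x)$, whereas if $x \in S$ then $Tx \notin S$ by independence, so $g(Tx) \in \set{1,2}$ and again $g(Tx) \neq g(x) = 0$. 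Since every edge has the form $\set{x, Tx}$, this shows $c$ is Borel and proper, giving $\chi_\mathsf{B}(G_\mathrm{rot}) \leq 3$ and therefore $\chi_\mathsf{M}(G_\mathrm{rot}), \chi_\mathsf{BM}(G_\mathrm{rot}) \leq 3$ as well.

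The crux is the lower bound. Suppose toward a contradiction that $c \colon [0,1) \to \set{0,1}$ is a proper $2$-coloring that is measurable or Baire-measurable, and put $A \defeq c^{-1}(1)$. Properness says $c(Tx) = 1 - c(x)$ for every $x$, so $T(A) = [0,1) \setminus A$ and hence $T^2(A) = A$; that is, $A$ is invariant under the rotation $T^2$ by $2\alpha$. Since $\alpha$ is irrational so is $2\alpha$, and the irrational rotation $T^2$ is both ergodic for Lebesgue measure $\mu$ and minimal. Ergodicity forces $\mu(A) \in \set{0,1}$, which contradicts $\mu(A) = \mu(T(A)) = 1 - \mu(A)$, i.e.\ $\mu(A) = 1/2$; minimality forces any $T^2$-invariant set with the Baire property to be meager or comeager, which is impossible because, viewing $[0,1)$ as the circle $\R/\Z$ on which $T$ is a homeomorphism, the sets $A$ and $A^c = T(A)$ are homeomorphic images of one another---hence simultaneously meager or simultaneously comeager---whereas the circle is not a union of two meager sets. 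Either way we reach a contradiction, so $\chi_\mathsf{M}(G_\mathrm{rot}), \chi_\mathsf{BM}(G_\mathrm{rot}) \geq 3$, and all three chromatic numbers equal $3$. I expect the main obstacle to be exactly this final paragraph: one must supply the ergodicity and minimality of irrational rotations and then run the \emph{same} $T^2$-invariance argument cleanly in parallel in the measure-theoretic and the Baire-category settings.
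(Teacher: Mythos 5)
Your proof is correct, but there is nothing in the paper to compare it against line by line: the article gives no proof of this proposition, quoting it from Conley--Marks--Tucker-Drob \cite{CMTD}, and the only ingredient it supplies is the upper bound via the explicit interval partition of Fig.~\ref{fig:Borel3col}. Writing $T(x) = x + \alpha \bmod 1$ as you do, your upper bound takes a different, more abstract route: a Borel maximal independent set $S$ \ep{available from Theorem~\ref{theo:KST} by a greedy pass through the color classes} together with the forward-distance coloring $g(x) = \min\set{n \geq 0 : T^n x \in S} \in \set{0,1,2}$. This is correct --- maximality indeed forces $g \leq 2$, and your case analysis shows properness --- and it has the virtue of working verbatim for any Borel graph whose edges are induced by a fixed-point-free Borel bijection, not just the rotation. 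Your lower bound is the standard invariance argument and is the essential content of the proposition: a proper $2$-coloring has its color classes swapped by $T$, hence each class is invariant under the rotation $T^2$ by the again-irrational $2\alpha$, which is impossible for a measurable set of measure $1/2$ by ergodicity, and impossible for a set with the Baire property by the topological zero-one law together with the fact that the circle is not the union of two meager sets \ep{you are right to pass from $[0,1)$ to $\R/\Z$ so that $T$ becomes a homeomorphism}. It is worth noting that the strategy the paper uses for the hypercube $\mathbb{H}$ --- showing that \emph{every} measurable independent set is null and every Baire-measurable one is meager --- is unavailable here, since $G_{\mathrm{rot}}$ has Borel independent sets of positive measure, e.g., any interval of length less than $\min(\alpha, 1-\alpha)$; so some form of your parity/invariance trick is genuinely required, and it is essentially the argument in \cite{CMTD}. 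The only loose point is your parenthetical claim that an explicit maximal independent set ``yields'' the coloring of Fig.~\ref{fig:Borel3col}; that figure's coloring comes from adapting the interval construction for $G_{\mathrm{tr}}$ rather than from your function $g$, but nothing in your argument depends on this remark.
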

    
    In other words, $G_{\mathrm{tr}}$ and $G_{\mathrm{rot}}$ are two graphs that are combinatorially isomorphic, but the topological structures on their vertex sets affect their Borel, measurable, and Baire-measurable chromatic numbers in different ways.
    
    \subsubsection{Locally checkable labeling problems}
    
    In the above examples, we have been interested in chromatic numbers of graphs as well as in their Borel, measurable, etc.~analogs. Naturally, there are many other combinatorial concepts we may want to study. Many of them fall into the general framework of \emph{locally checkable labelling \ep{LCL} problems} \ep{also known as \emph{local coloring problems}}. In an LCL problem $\Pi$, we are asked to assign labels from a finite set $\Lambda$ to the vertices of a graph $G$ in such a way that certain constraints are satisfied. The constraints must be ``local'' in the sense that they can be checked by looking at the labels of each vertex and its neighbors. For a formal \ep{and somewhat more general} definition, see, e.g., \cite[\S2.A.1]{BerDist}. For our purposes, it will suffice to give a few examples that illustrate what sort of problems fall into this category.
    
    The prototypical example is the \emph{$k$-coloring problem}: assign the labels $\Lambda = \set{1,\ldots,k}$, referred to as ``colors,'' to the vertices of $G$ so that the label of each vertex is distinct from the labels of its neighbors. Note that this is a ``local'' constraint in the sense described above: it only involves comparing the label of a vertex to those of its neighbors. By definition, the set of all vertices receiving a particular label in a $k$-coloring must be a $G$-independent set, and hence $G$ admits a $k$-coloring if and only if $\chi(G) \leq k$.
    
    Another well-studied problem is the \emph{Maximal Independent Set \ep{MIS} problem}: find an independent set $I \subseteq V(G)$ that is maximal under inclusion. This can be interpreted as an LCL problem as follows. We can encode a set $I \subseteq V(G)$ by its characteristic function, which assigns the labels $\Lambda = \set{0,1}$ to the vertices of $G$. It is easy to see that $I$ is an MIS if and only if this labelling satisfies the following ``local'' constraint:
    \begin{quote}
        \textsl{If a vertex is labeled $1$, then all its neighbors are labeled $0$, while if a vertex is labeled $0$, then at least one of its neighbors is labeled $1$.}
    \end{quote}
    
    As our last example, consider the \emph{perfect matching problem}: find a set of edges $M \subseteq E(G)$ such that every vertex is incident to exactly one edge in $M$. If $xy$ is an edge in $M$, then we say that the vertices $x$ and $y$ are \emph{matched} to each other. Thus, a perfect matching splits $V(G)$ into pairs of matched vertices. To place the perfect matching problem into the LCL framework, let us assume that $G$ is a graph of bounded degree; more precisely, suppose that every vertex of $G$ has at most $d$ neighbors, for some $d \in \N$. For each $x \in V(G)$, we fix an arbitrary ordering of the neighbors of $x$. \ep{If $G$ is a Borel graph, it is possible to fix such an ordering ``in a Borel way,'' in the sense that for each $i$, the set $\set{(x,y) \in V(G) \times V(G) \,:\, \text{$y$ is the $i$-th neighbor of $x$}}$ is Borel.} Then we can identify a perfect matching with a labeling of $V(G)$ using the labels $\Lambda = \set{1,\ldots,d}$ such that a vertex is given the label $i$ when it is matched to its $i$-th neighbor. Such labeling encodes a perfect matching if and only if the following constraint holds:
    \begin{quote}
        \textsl{Let $x$ be a vertex labeled $i$ and let $y$ be the $i$-th neighbor of $x$. If $y$'s label is $j$, then $x$ is the $j$-th neighbor of $y$.}
    \end{quote}
    In other words, if $x$ is matched to $y$, then $y$ must be matched to $x$. This constraint is again ``local,'' which makes the perfect matching problem an LCL problem \ep{at least on bounded degree graphs}.
    
    In descriptive combinatorics, we are interested in Borel solutions to LCL problems: 
    
    \begin{defn}[Borel solutions to LCL problems]\label{defn:BorelLCL}
        Let $\Pi$ be an LCL problem with label set $\Lambda$ and let $G$ be a Borel graph. A labeling of the vertices of $G$ by the labels from $\Lambda$ is a \emph{Borel solution} to $\Pi$ if  it fulfills all the constraints of the problem $\Pi$ and, additionally, for each label $\lambda \in \Lambda$, the set of all vertices labeled $\lambda$ is Borel.
        
        \emph{Measurable} and \emph{Baire-measurable solutions} to $\Pi$ are defined analogously.
    \end{defn}
    
    By applying Definition~\ref{defn:BorelLCL} to specific LCL problems, we obtain, as special cases, Borel $k$-colorings, Borel maximal independent sets, and Borel perfect matchings. In general, a lot of research in descriptive combinatorics can be seen as addressing the following comprehensive question:
    
    \begin{ques}[Descriptive complexity of LCL problems]\label{prob:general}
        Given a class $\mathcal{G}$ of Borel graphs, which LCL problems admit Borel/measurable/etc.~solutions on the graphs in $\mathcal{G}$?
    \end{ques}
    
    As stated, this question may seem way too general to say anything meaningful about. However, rather surprisingly, some recent work has led to significant progress toward a complete answer! The key to this new work is the discovery of an intimate connection between the answer to Question~\ref{prob:general} and the complexity of solving the given LCL problem on {finite} graphs using a distributed algorithm. The second part of this paper is dedicated to a survey of this connection.
    
    \section{Connections to distributed computing}
    
    \subsection{The \LOCAL model of distributed computation}
    
    The area of distributed computing studies computational processes performed by decentralized groups of independent agents. There are several different models of distributed computation that emphasize various aspects of it, such as fault-tolerance, communication bandwidth, etc. For our purposes, the relevant model is called \LOCAL and was formally introduced by Linial in 1992 \cite{Linial} \ep{although some related results had already been established somewhat earlier, e.g., by Alon, Babai, and Itai \cite{ABI}, Luby \cite{Luby}, and Goldberg, Plotkin, and Shannon \cite{GPSh}}. For a comprehensive introduction to this model, see the book \cite{BE} by Barenboim and Elkin. We now briefly describe the key elements of this model.
    
    The \LOCAL model operates on an $n$-vertex graph $G$. \ep{Note that, in contrast to the examples discussed in \S\ref{subsec:Borelgraphs}, the graph $G$ here is finite.} We think of $G$ as representing a decentralized communication network where each vertex plays the role of a processor and edges represent communication links. The computation proceeds in \emph{rounds}. During each round, the vertices first perform some local computations and then synchronously broadcast messages to all their neighbors.  After a number of rounds, every vertex must generate its own part of the output of the algorithm. For example, if the goal of the algorithm is to solve some LCL problem, then each vertex must eventually decide on its own label. The efficiency of such an algorithm is measured by the number of communication rounds required to produce the output. In particular,  there are no restrictions on the complexity of the local computations that the vertices perform during each round \ep{we imagine that the computational resources available to the vertices are infinite} or on the length of the messages that the vertices send to each other.
    
    An important feature of the \LOCAL model is that every vertex of $G$ is executing \emph{the same} algorithm. Therefore, to make this model nontrivial, there must be a way to distinguish the vertices from each other. There are two standard symmetry-breaking approaches, leading to the distinction between deterministic and randomized \LOCAL algorithms:
    \begin{itemize}
        \item In the \emph{deterministic} version of the \LOCAL model, each vertex $x \in V(G)$ is assigned, as part of its input, an identifier $\mathrm{Id}(x)$, which is a string of $\Theta(\log n)$ bits. It is guaranteed that the identifiers assigned to different vertices are distinct. Subject to this guarantee, the algorithm must always output a correct solution to the problem, regardless of the specific assignment of the identifiers.
        
        \item In the \emph{randomized} version of the \LOCAL model, each vertex may generate an arbitrarily long finite sequence of independent uniformly distributed random bits. The algorithm may fail to produce a correct solution to the problem, but the probability of failure must not exceed $1/n$.
    \end{itemize}
    
    We remark that the randomized version of the model is more computationally powerful than the deterministic one. This is because a deterministic \LOCAL algorithm can be simulated by a randomized one: each vertex can simply generate a random sequence of $\lceil 3 \log_2 n\rceil$ bits and use it as its identifier---the probability that two identifiers generated in this way coincide is easily seen to be less than $1/n$.
    
    If $x$ and $y$ are two vertices whose graph distance in $G$ \ep{i.e., the length of the shortest $xy$-path} is $T$, then no information from $y$ can reach $x$ in fewer than $T$ communication rounds \ep{this explains the name ``\LOCAL'' given to the model}. Conversely, in $T$ rounds every vertex can collect all the data present at the vertices at distance at most $T$ from it. Thus, a $T$-round \LOCAL algorithm may be construed simply as a function that, given the structure of the radius-$T$ ball around $x$ \ep{including all the additional information, such as the assignment of the identifiers or random bit sequences to its vertices}, decides on $x$'s output. In other words, the complexity of a \LOCAL algorithm measures how far in the graph an individual vertex should be allowed to ``see'' in order to be able to produce its own output. Naturally, if every vertex is allowed to ``see'' the entire graph, then the model trivializes. Thus, we are interested in algorithms that only allow each vertex to access a relatively small part of the graph.
    
    As an illustration, let us consider a simple example. Suppose that $G$ is an $n$-vertex path. Of course, $\chi(G) = 2$. How fast can a $2$-coloring of $G$ be computed by a \LOCAL algorithm? Obviously, $O(n)$ rounds suffice \ep{because in that many rounds each vertex will have access to the entire graph}. On the other hand, it is fairly easy to show that $o(n)$ rounds are not enough:
    
    \begin{figure}[t]
		\centering	
		\begin{tikzpicture}[xscale=1.5]
		    \node[outer sep=2,inner sep=0] (x) at (1.5,-0.3) {$x$};
		    \node[outer sep=2,inner sep=0] (y) at (4,-0.3) {$y$};
		    \node[outer sep=2,inner sep=0] (z) at (6.7,-0.3) {$z$};
		
		    \fill[orange] (0.8,-0.1) rectangle (2.2,0.1);
			
			\fill[orange] (3.3,-0.1) rectangle (4.7,0.1);
			\fill[orange] (6,-0.1) rectangle (7.4,0.1);
			
			\draw [decorate,decoration={brace,amplitude=6pt}]
			(1.5,0.2) -- (4,0.2) node [black,midway,yshift=0.4cm] {even};
			
			\draw [decorate,decoration={brace,amplitude=30pt}]
			(1.5,0.4) -- (6.7,0.4) node [black,midway,yshift=1.3cm] {odd};
			
			\node (a) at (2.75,-1.5) {same color};
			\draw[-{Stealth[length=1.6mm]}] (a) to (x);
			\draw[-{Stealth[length=1.6mm]}] (a) to (y);
			
			\node (b) at (6,-1.5) {different colors};
			\draw[-{Stealth[length=1.6mm]}] (b) to (x);
			\draw[-{Stealth[length=1.6mm]}] (b) to (z);
			
			\draw [decorate,decoration={brace,amplitude=6pt},color=red]
			(3.3,0.1) -- (4.7,0.1);
			\draw [decorate,decoration={brace,amplitude=6pt},color=red]
			(6,0.1) -- (7.4,0.1);
			
			\node (c) at (6,1.5) {\red{switch!}};
			\draw[-{Stealth[length=1.6mm]},color=red] (c) to (4,0.35);
			\draw[-{Stealth[length=1.6mm]},color=red] (c) to (6.7,0.35);
			
			\draw (0,0) -- (8, 0);
			
			\filldraw (1.5,0) circle (2pt);
			\filldraw (4,0) circle (2pt);
			\filldraw (6.7,0) circle (2pt);
		\end{tikzpicture}
		\caption{An illustration for the proof of Proposition~\ref{prop:path}. The three marked intervals are of length $2T = o(n)$.}\label{fig:path}
	\end{figure}
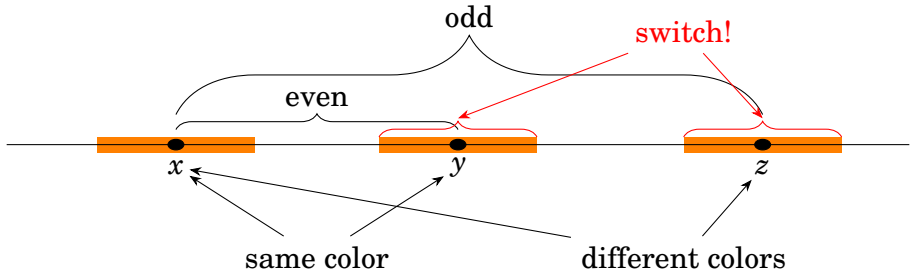
    
    \begin{prop}\label{prop:path}
        There is no $o(n)$-round \LOCAL algorithm \ep{either deterministic or randomized} that finds a $2$-coloring of an $n$-vertex path.
    \end{prop}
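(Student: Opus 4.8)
The plan is to exploit the \emph{rigidity} of proper $2$-colorings of a path against the inherent \emph{locality} of the model. The key combinatorial fact is that a proper $2$-coloring of a path is determined up to a global swap of the two colors: for any two vertices $u$, $w$ one has $c(u) = c(w)$ if and only if $\dist(u,w)$ is even. Thus the relative colors of two vertices encode the parity of their distance, a \emph{global} quantity. On the other hand, as explained in the description of the model, the output of a $T$-round algorithm at a vertex $v$ is a fixed function of the rooted, labeled radius-$T$ ball $B_T(v)$ around $v$. When $T = o(n)$ these balls are far too small to reveal distance parities between far-apart vertices, and I would derive a contradiction from this mismatch.

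\textbf{Deterministic case.} Suppose $A$ is a deterministic $T$-round algorithm with $T = o(n)$, and take $n$ large enough that an $n$-vertex path contains three vertices $x$, $y$, $z$ whose radius-$T$ balls are pairwise disjoint and interior \ep{no ball reaching an endpoint}, with $\dist(x,y)$ even and $\dist(x,z)$ odd; this is possible since $n \geq 4T + O(1)$ for large $n$. Fix any assignment of distinct identifiers, run $A$, and let $c_x, c_y, c_z$ be the resulting colors, so that correctness forces $c_x = c_y$ and $c_x \neq c_z$. Now I would \emph{switch} the instance: as $B_T(y)$ and $B_T(z)$ are disjoint segments of the same shape, swap the identifier patterns carried by these two balls, leaving everything else \ep{in particular $B_T(x)$ and all adjacencies} untouched. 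This is again a legal instance with distinct identifiers, and since $A$'s output depends only on the labeled ball, the vertex at $y$ now sees the old ball of $z$ and outputs $c_z$, the vertex at $z$ outputs $c_y$, and the output at $x$ is unchanged. Correctness on the switched instance then forces $c_x = c_z$ and $c_x \neq c_y$, contradicting the previous sentence.

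\textbf{Randomized case.} Here there are no preassigned identifiers, so interior vertices are genuinely exchangeable. I would pick interior vertices $x$, $z$ at odd distance $2T+1$, so that $B_T(x)$ and $B_T(z)$ are disjoint; consequently the random colors $C_x$ and $C_z$ depend on disjoint collections of independent random bits and are therefore \emph{independent}, while exchangeability of interior vertices gives $\Pr[C_x = 1] = \Pr[C_z = 1] =: p$, whence $\Pr[C_x \neq C_z] = 2p(1-p) \leq 1/2$. On the other hand, each of the $2T+1$ edges along the $x$--$z$ subpath is monochromatic with probability at most $1/n$ \ep{a monochromatic edge is a failure}, so by a union bound the entire subpath alternates---and hence $C_x \neq C_z$, the endpoints being at odd distance---with probability at least $1 - (2T+1)/n$. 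For $T = o(n)$ this lower bound exceeds $1/2$ once $n$ is large, contradicting $\Pr[C_x \neq C_z] \leq 1/2$. \ep{Alternatively, one may prove only the randomized statement and deduce the deterministic one, since a deterministic $o(n)$-round algorithm would yield a randomized one of the same round complexity by sampling identifiers, as noted earlier.}

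\textbf{Main obstacle.} The conceptual heart---locality cannot perceive a global parity---is immediate, and I expect the real care to lie in the bookkeeping of the two cases. In the deterministic case the identifiers must remain globally distinct, which is precisely why one cannot simply clone a local picture and must instead \emph{swap} two equally shaped balls; in the randomized case the subtlety is to justify simultaneously the independence of $C_x$ and $C_z$ and the equality $\Pr[C_x = 1] = \Pr[C_z = 1]$ coming from the exchangeability of interior vertices, after which the union bound closes the argument.
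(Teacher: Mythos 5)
Your deterministic argument is, in essence, exactly the paper's proof: the same three vertices $x$, $y$, $z$ with $\dist(x,y)$ even, $\dist(x,z)$ odd, and pairwise distances exceeding $2T$, followed by the same swap of identifier patterns between the balls of $y$ and $z$, producing the same parity contradiction. The paper stops there and explicitly leaves the randomized case as an exercise, so that half of your proposal is genuinely additional content---and it is correct: disjointness of $B_T(x)$ and $B_T(z)$ gives independence of $C_x$ and $C_z$, the isomorphism of the two interior balls gives identical distributions, hence $\Pr[C_x \neq C_z] = 2p(1-p) \leq 1/2$, while correctness forces $\Pr[C_x \neq C_z] \geq 1 - (2T+1)/n > 1/2$ for large $n$. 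Two trivial remarks: the union bound over the $2T+1$ edges is unnecessary, since the event that \emph{any} edge is monochromatic is already a failure event of probability at most $1/n$, so $\Pr[C_x \neq C_z] \geq 1 - 1/n$ directly; and three pairwise disjoint interior radius-$T$ balls require $n \geq 6T + O(1)$ rather than $4T + O(1)$, which is immaterial since $T = o(n)$.
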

    \begin{proof}
        We give an argument in the deterministic case \ep{leaving the randomized case as a nice exercise to the reader}. Suppose that there is a $T$-round deterministic \LOCAL algorithm $\mathcal{A}$ for $2$-coloring an $n$-vertex path, where $T \ll n$. We can pick three vertices $x$, $y$, $z$ such that the distance between $x$ and $y$ is even, while the distance between $x$ and $z$ is odd, and, furthermore, all the pairwise distances between $x$, $y$, and $z$ exceed $2T$ \ep{see Fig.~\ref{fig:path}}. Fix any assignment of identifiers and let $c(x)$, $c(y)$, $c(z)$ be the colors given by the algorithm $\mathcal{A}$ to $x$, $y$, and $z$ respectively. In a $2$-coloring of a path, the two colors alternate, so we must have $c(x) = c(y) \neq c(z)$. Without loss of generality, say $c(x) = c(y) = 1$ and $c(z) = 2$. Now let us exchange the assignments of the identifiers in the interval of length $2T$ centered around $y$ with those in the interval centered around $z$ and let $c'(x)$, $c'(y)$, $c'(z)$ be the colors given to $x$, $y$, and $z$ by the algorithm $\mathcal{A}$ with this new assignment of identifiers. Since the output of $\mathcal{A}$ at a vertex is determined by the identifiers in the interval of length $2T$ centered around it, we have $c'(x) = c'(z) = 1$ and $c'(y) = 2$. This is a contradiction since the colors of $x$ and $y$ must be the same, while the colors of $x$ and $z$ must be different.
    \end{proof}
    
    \subsection{A sample of results}
    
    The first hint of the connection between distributed algorithms and descriptive combinatorics can be obtained by comparing some known results in the two areas. For example, consider the following problem:
    \begin{quote}
        \textsl{Fix a natural number $d \in \N$. What is the minimum $k = k(d)$ such that a $k$-coloring of a graph of maximum degree $d$ can be found efficiently?}
    \end{quote}
    Here the \emph{maximum degree} of a graph is the maximum number of neighbors of a vertex. If we interpret the word ``efficiently'' to mean ``with a fast \LOCAL algorithm,'' then we have the following result:
    
    \begin{theo}[{Goldberg--Plotkin--Shannon \cite[\S3]{BE}}]\label{theo:GPS}
        There exists a deterministic \LOCAL algorithm that finds a $(d+1)$-coloring of an $n$-vertex graph of maximum degree $d$ in $O(\log^\ast n)$ rounds.\footnote{Here and in the sequel we treat $d$ as a constant, meaning that the implied factors in the $O(\cdot)$ notation may depend on $d$.}
    \end{theo}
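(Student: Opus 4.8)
The plan is to produce the coloring purely deterministically, using the supplied identifiers for symmetry breaking, by beginning from a trivial proper coloring and then repeatedly shrinking the palette. The engine behind this is the fact recalled above that a $T$-round \LOCAL algorithm is precisely a rule computing a vertex's output from the radius-$T$ ball around it; consequently, running $r$ color-reduction rounds in succession constitutes an $r$-round algorithm. The starting coloring is free of charge: since the identifiers are pairwise distinct, the assignment $x \mapsto \mathrm{Id}(x)$ is itself a proper coloring, and as each identifier is a string of $\Theta(\log n)$ bits it uses only $\mathrm{poly}(n)$ colors. Everything then reduces to a one-round procedure that turns a proper $k$-coloring into a proper coloring with far fewer colors.

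The heart of the argument is a Cole--Vishkin-style bit-comparison trick. Suppose each vertex $x$ holds a color $c(x) \in \set{0, 1, \ldots, k-1}$, written in binary with $b \defeq \lceil \log_2 k \rceil$ bits. In a single round $x$ learns the colors of all of its \ep{at most $d$} neighbors. Since the coloring is proper, for each neighbor $y$ there is a bit position on which $c(x)$ and $c(y)$ disagree; $x$ records, for every neighbor, one such position $i$ together with the value of its own bit there, i.e.\ a pair in $\set{0,\ldots,b-1} \times \set{0,1}$. Taking the new color of $x$ to be the resulting list of at most $d$ pairs again yields a proper coloring: if $x$ and $y$ are adjacent, the pair $x$ records against $y$ certifies a bit on which they differ, so their new colors cannot agree. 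The new palette has size at most $(2b)^d = \mathrm{poly}(\log k)$ for fixed $d$ --- crucially only \emph{logarithmic}, up to a constant-degree polynomial, in the old palette size.

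Iterating this primitive is what produces the $\log^\ast$ bound. Writing $k_0 = \mathrm{poly}(n)$ for the initial palette and $k_{j+1} = O\left( (2\lceil \log_2 k_j \rceil)^d \right)$ for its size after one more round, the quantity $\ell_j \defeq \log_2 k_j$ obeys a recursion of the form $\ell_{j+1} = O(d \log_2 \ell_j)$, so for constant $d$ the palette shrinks essentially like an iterated logarithm. After $O(\log^\ast n)$ rounds the number of colors therefore drops to a constant $c(d)$ depending only on $d$ --- the point at which the blow-up coming from the $d$ neighbors overwhelms the logarithmic savings and the reduction stops making progress. I would verify this recursion carefully to confirm that the number of iterations needed to bring $k_0$ below the threshold $c(d)$ is $\Theta(\log^\ast n)$, which is precisely the definition of $\log^\ast$.

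It remains to descend from the constant palette of size $c(d)$ to exactly $d+1$ colors, which I would handle by a crude one-color-at-a-time reduction. Whenever the current palette has size $k > d+1$, the vertices carrying the top color form a $G$-independent set \ep{two of them cannot be adjacent in a proper coloring}, so they may all recolor simultaneously without interfering with one another, each choosing the least color in $\set{0, \ldots, d}$ avoided by its \ep{at most $d$} neighbors --- such a color exists by the pigeonhole principle. This removes one color per round and finishes in $c(d) - (d+1) = O(1)$ rounds, which is absorbed into the $O(\log^\ast n)$ bound since $d$ is a constant. The main obstacle, and the place where Goldberg, Plotkin, and Shannon improve on the classical three-coloring argument for paths and cycles, is precisely the design of the reduction primitive so that it breaks symmetry against \emph{all} $d$ neighbors at once rather than a single oriented predecessor, while keeping the new palette only logarithmic in the old one; the bit-list encoding above is the idea that makes this work on graphs of bounded degree.
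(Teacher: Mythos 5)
The paper states this theorem without proof, citing \cite[\S3]{BE}, and your argument is essentially the standard Goldberg--Plotkin--Shannon proof presented there: use the distinct identifiers as an initial proper $\mathrm{poly}(n)$-coloring, apply the Cole--Vishkin-style one-round reduction (recording, for each neighbor, a differing bit position together with one's own bit value) to shrink the palette to a constant $c(d)$ in $O(\log^\ast n)$ rounds, and then eliminate one color per round down to $d+1$. Your properness arguments for both reduction steps and the iterated-logarithm analysis of the recursion are correct, so this matches the cited source's approach.
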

    
    In the statement of Theorem~\ref{theo:GPS}, $\log^\ast n$ is the \emph{iterated logarithm} of $n$, i.e., the number of times the logarithm function must be iteratively applied to $n$ before the result becomes at most $1$. This is an extremely slow growing function. In particular---and most importantly for us---it is asymptotically of order $o(\log n)$. In a graph of maximum degree $d$, a vertex can ``see'' at most $d^T$ other vertices in $T$ rounds of the \LOCAL model. Therefore, if $T = o(\log n)$, a vertex will definitely not have access to the entire graph. In this sense, $o(\log n)$ rounds is a natural threshold for ``truly local'' algorithms.
    
    Can we reduce the number of colors to $d$? An obvious obstacle is that $G$ may contain a clique on $d + 1$ vertices, all of which would have to receive different colors. On the other hand, the so-called Brooks's theorem in graph theory asserts that if $d \geq 3$ and $G$ is a graph of maximum degree $d$ without a $(d+1)$-clique, then $\chi(G) \leq d$, i.e., a $d$\=/coloring of $G$ exists \cite[Theorem 5.2.4]{Diestel}. Nevertheless, it turns out that no efficient deterministic \LOCAL algorithm can find such a coloring, even if $G$ has no cycles:
    
    \begin{theo}[{Chang--Kopelowitz--Pettie \cite{CKP}}]\label{theo:nod}
        There is no deterministic \LOCAL algorithm that finds a $d$-coloring of an $n$-vertex acyclic graph of maximum degree $d$ in $o(\log n)$ rounds.
    \end{theo}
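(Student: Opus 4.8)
The plan is to argue by contradiction, combining two classical ingredients from the theory of the \LOCAL model: a Ramsey-theoretic normalization of the algorithm (in the spirit of Naor and Stockmeyer) and a ``speedup'' self-reduction that forces the complexity of any sufficiently fast algorithm to collapse down to $O(\log^\ast n)$. Suppose, then, that $\mathcal{A}$ is a deterministic \LOCAL algorithm that $d$-colors every acyclic graph of maximum degree $d$ in $T = T(n)$ rounds, where $T(n) = o(\log n)$. As in the proof of Proposition~\ref{prop:path}, the key structural fact is that the color $\mathcal{A}$ assigns to a vertex $x$ is a function only of the isomorphism type of the radius-$T$ ball around $x$ together with the identifiers appearing in it.

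First I would pass to an \emph{order-invariant} algorithm. Since the identifiers are merely elements of a linearly ordered set and each radius-$T$ ball contains at most $d^{T+1}$ of them, Ramsey's theorem lets us restrict the identifier space to a large subset on which the output of $\mathcal{A}$ depends only on the \emph{relative order} of the identifiers in the ball, not on their actual values. After this reduction we may assume that $\mathcal{A}$ is order-invariant, while retaining the same round complexity on all sufficiently large instances.

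The heart of the argument is the speedup step: I would show that an order-invariant $d$-coloring algorithm of complexity $o(\log n)$ can be simulated by one of complexity $O(\log^\ast n)$. The idea is that order-invariance means the algorithm never truly needs the global identifiers, only a consistent local ordering; one can manufacture such an ordering cheaply by first computing a distance-$k$ coloring of the tree in $O(\log^\ast n)$ rounds (in the spirit of Theorem~\ref{theo:GPS}) and feeding it to $\mathcal{A}$ in place of genuine identifiers, using the tree structure to keep the simulation radius bounded. This compression drives the effective dependence on $n$ down exponentially, yielding the claimed $O(\log^\ast n)$-round algorithm. I would then derive a contradiction by showing directly that $d$-coloring a tree of maximum degree $d$ \emph{cannot} be performed in $O(\log^\ast n)$ rounds: using an indistinguishability/round-elimination argument analogous to the ``switching'' trick of Proposition~\ref{prop:path}, one exhibits two locally identical neighborhoods whose correct $d$-colorings are nonetheless forced to disagree, something an $O(\log^\ast n)$-round algorithm cannot detect.

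I expect the main obstacle to be the speedup lemma. Making precise the claim that order-invariance permits an exponential reduction in round complexity requires careful bookkeeping: the neighborhood size $d^{T+1}$ grows with $n$, so the Ramsey argument must be applied uniformly in $n$, and the simulation must guarantee that the synthetic ordering supplied to $\mathcal{A}$ is globally consistent wherever two balls overlap. The base case---ruling out an $O(\log^\ast n)$-round $d$-coloring---is comparatively concrete but still demands a genuinely combinatorial construction of the hard tree instance, rather than the single long path that sufficed for Proposition~\ref{prop:path}.
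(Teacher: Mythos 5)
Your high-level plan---a speedup theorem collapsing $o(\log n)$ to $O(\log^\ast n)$, followed by a lower bound against the sped-up algorithm---is the skeleton of Chang--Kopelowitz--Pettie's original argument, and it is genuinely different from the route this paper takes: here Theorem~\ref{theo:nod} is deduced from descriptive set theory, namely Marks's Theorem~\ref{theo:marks} supplies an acyclic Borel graph of maximum degree $d$ with $\chi_\mathsf{B}(G)>d$, and the transfer Theorem~\ref{theo:dist_to_desc}\ref{item:a}, read in contrapositive, then rules out any $o(\log n)$-round deterministic algorithm. Within your plan, however, you have located the difficulty in the wrong place. The speedup step is the routine half, and it needs no Ramsey/order-invariance reduction at all: one runs the algorithm intended for $n_0$-vertex instances on the large graph, feeding it a distance-$(2T+2)$ coloring with $n_0$ colors (computable in $O(\log^\ast n)$ rounds in the spirit of Theorem~\ref{theo:GPS}) in place of genuine identifiers; correctness transfers because every radius-$T$ ball with locally distinct fake identifiers embeds into a legitimate $n_0$-vertex instance and the constraints of an LCL are local. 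This is precisely the argument sketched in this paper for Theorem~\ref{theo:dist_to_desc}\ref{item:a}, which, as noted there, is also how CKP prove the $\omega(\log^\ast n)$--$o(\log n)$ gap. Note also that by that very gap theorem, ruling out $O(\log^\ast n)$-round algorithms is formally equivalent to ruling out $o(\log n)$-round ones, so the speedup by itself buys nothing: all the content must live in the base case.

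And the base case is exactly where your proposal has a genuine gap. You must show that $d$-coloring trees of maximum degree $d$ cannot be solved in $O(\log^\ast n)$ deterministic rounds, and no indistinguishability or switching argument ``analogous to Proposition~\ref{prop:path}'' can possibly establish this. The path argument exploits the rigidity of $2$-colorings: the color of one vertex determines the color of every vertex in its component, so outputs at far-apart vertices are forced to correlate. For $d\geq 3$ this rigidity is simply absent: in a tree, any proper $d$-colorings of two disjoint balls extend to a proper $d$-coloring of the whole tree (root the tree inside one ball and color greedily outward; when an uncolored vertex is processed, its only colored neighbors are its parent and at most one vertex of the other ball, since a second neighbor in that ball would close a cycle, so at most two of the $d\geq 3$ colors are forbidden). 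Hence the configuration your argument requires---two locally identical neighborhoods whose correct colors are forced to disagree---does not exist. Consistently with this, CKP also prove that $d$-coloring trees \emph{is} solvable deterministically in $O(\log n)$ rounds, so any valid lower-bound technique must cap out at $\Theta(\log n)$, whereas switching arguments, when they apply, prove $\Omega(n)$-type bounds. The known proofs of this step are the real content of the theorem: CKP's adaptation of Brandt \emph{et al.}'s round-elimination technique (a randomized $\Omega(\log\log n)$ bound which, combined with the gap theorem, excludes $o(\log n)$ deterministic algorithms), or Marks's Borel-determinacy construction transferred through Theorem~\ref{theo:dist_to_desc}\ref{item:a}, the path this paper follows. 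Your proposal contains neither, so as written it does not prove the theorem.
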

    
    In contrast to Theorem~\ref{theo:nod}, it is possible to reduce the number of colors using a randomized algorithm:
    
    \begin{theo}[{Ghaffari--Hirvonen--Kuhn--Maus \cite{GHKM}}]\label{theo:yesd}
        There exists a randomized \LOCAL algorithm that finds a $d$-coloring of an $n$-vertex graph $G$ of maximum degree $d$ in $O((\log \log n)^2)$ rounds, provided that $d \geq 3$ and $G$ has no $(d+1)$-cliques.
    \end{theo}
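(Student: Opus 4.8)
The plan is to follow the standard \emph{augmentation-plus-shattering} strategy for randomized distributed coloring, using the hypotheses $d \geq 3$ and the absence of a $(d+1)$-clique in exactly the way Brooks's theorem does. First I would compute a proper $(d+1)$-coloring in $O(\log^\ast n)$ rounds via Theorem~\ref{theo:GPS}; the whole task then reduces to \emph{eliminating one color}, that is, recoloring so that only $d$ colors survive. The obstruction is purely local: a vertex all of whose $d$ neighbors carry distinct colors has no spare color. Hence the real content is to manufacture \emph{slack}---a repeated color in some vertex's neighborhood, or equivalently a free color in its palette---at every vertex, and then to repair the coloring along short recoloring paths.

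To manufacture slack I would first compute an \emph{almost-clique decomposition} of $G$, which in $O(1)$ rounds partitions $V(G)$ into \emph{sparse} vertices (whose neighborhoods span noticeably fewer than $\binom{d}{2}$ edges) and a family of \emph{almost-cliques}, each an induced near-complete graph on roughly $d$ vertices. A sparse vertex acquires slack from a single randomized trial: every vertex independently proposes a uniformly random color from its current palette and keeps it if no neighbor proposes the same color; with constant probability two neighbors of a sparse vertex end up equally colored, freeing a color for it. The almost-cliques are handled using the forbidden-clique hypothesis directly: since an almost-clique $K$ is \emph{not} a complete graph on $d+1$ vertices, it contains two non-adjacent vertices, and assigning them a common color saves exactly the one color needed to complete $K$ using only $d$ colors.

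The round bound comes from the \emph{shattering} phenomenon. After $O(\log\log n)$ rounds of randomized trials, standard concentration arguments show that, with probability at least $1 - 1/n$, the vertices still lacking a valid color induce connected components of size $N = \mathrm{poly}(\log n)$. On each such component one runs a deterministic list-coloring (or network-decomposition) subroutine, whose complexity is $\mathrm{poly}(\log N) = \mathrm{poly}(\log\log n)$; balancing the two phases yields the advertised $O((\log\log n)^2)$ bound.

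The hard part will be the bookkeeping that guarantees slack is available \emph{simultaneously} everywhere and that neighboring recoloring operations do not interfere. Brooks's theorem is a genuinely global statement, so the crux is to localize it: one must show that the missing edge inside each almost-clique, together with the randomly generated slack outside, can always be converted into a valid $d$-coloring using augmenting paths of bounded length, and that the total failure probability stays below $1/n$. The shattering estimate and the deterministic post-processing are comparatively routine applications of known tools; essentially all of the difficulty is concentrated in controlling these local dependencies.
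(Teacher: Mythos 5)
This paper is a survey: Theorem~\ref{theo:yesd} is stated with a citation to Ghaffari--Hirvonen--Kuhn--Maus \cite{GHKM} and is never proved, or even sketched, in the text, so there is no in-paper proof to compare yours against. Judged on its own terms, your outline does follow the general strategy of the cited work---almost-clique decomposition, randomized slack generation at sparse vertices, shattering, and deterministic completion inside components of size $\mathrm{poly}(\log n)$---but it contains a genuine gap, and the gap sits exactly where you place ``the hard part.''

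The gap is in your treatment of the dense side. The hypothesis forbids $(d+1)$-cliques, not $d$-cliques, and an almost-clique on at most $d$ vertices may be a genuine clique; such a $K_d$ contains no non-adjacent pair, so the ``missing edge'' you propose to exploit need not exist. This is not a corner case but the central obstruction in Brooks-type coloring: if each of the $d$ vertices of a $K_d$ has one external neighbor and all of those external neighbors happen to carry the same color, then no assignment of $d$ colors inside the clique can be proper---one is forced to recolor vertices \emph{outside} the clique, i.e., to perform a genuine augmenting step, and such paths need not have bounded length. Your sketch offers no mechanism for these tough cliques; in the actual argument of \cite{GHKM} this repair work is ultimately deferred to the post-shattering phase, where it is carried out deterministically inside components of size $N=\mathrm{poly}(\log n)$ by means of a network decomposition. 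That is also where the stated round bound really comes from: roughly $O(\log N)$ cluster classes processed sequentially, each requiring $O(\log N)$ rounds for intra-cluster coordination, for a total of $O(\log^2 N)=O((\log\log n)^2)$. Your accounting (``$\mathrm{poly}(\log N)$ \ldots\ balancing the two phases'') does not produce the exponent $2$, so both the key combinatorial step and the key quantitative step remain to be supplied.
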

    
    
    Now let us turn to the descriptive combinatorics side of the picture. How many colors do we need for a Borel coloring of a Borel graph of maximum degree $d$? The first result parallels Theorem~\ref{theo:GPS}:
    
    \begin{theo}[{Kechris--Solecki--Todorcevic \cite{KST}}]\label{theo:KST}
        If $G$ is a Borel graph of maximum degree $d$, then $\chi_\mathsf{B}(G) \leq d + 1$.
    \end{theo}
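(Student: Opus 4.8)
The plan is to mimic the classical greedy/sequential coloring argument, but to carry it out "in a Borel way" by exploiting the fact that a bounded-degree Borel graph can be decomposed into countably many pieces on which the coloring can be performed consistently. The naive approach---well-order the vertices and greedily assign to each the least color not used by its already-colored neighbors---fails to be Borel, since it implicitly invokes a well-ordering of an uncountable set (exactly the kind of non-constructive step that Proposition~\ref{prop:cube} warns against). So the real work is to replace the well-ordering by a Borel structure that still permits a greedy pass.

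\textbf{Key steps.} First I would establish the main tool: a Borel graph $G$ of maximum degree $d$ admits a \emph{Borel proper edge/vertex coloring into countably many Borel pieces} that break symmetry locally. Concretely, the standard device is the following. Since $V(G)$ is a standard Borel space, fix a Borel linear order (equivalently, a Borel bijection $V(G) \to \set{0,1}^\N$ and pull back the lexicographic order). This order is Borel but has no least elements along components, so one cannot greedily color directly. Instead, the key lemma to prove is that $E(G)$ can be partitioned into countably many Borel matchings, or equivalently that $G$ admits a Borel proper coloring with countably many colors---call the color classes $V_0, V_1, V_2, \ldots$, each Borel and $G$-independent. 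One obtains such a decomposition from the Luzin--Novikov theorem: because every vertex has at most $d$ neighbors, the edge relation has countable sections, and one can Borel-uniformize to label neighbors $1, \ldots, d$ in a Borel fashion (as already asserted in the excerpt for the perfect matching discussion). Iterating this neighbor-indexing yields a Borel function assigning to each vertex a natural number so that adjacent vertices get distinct numbers; this is the countable Borel coloring $V(G) = \bigsqcup_n V_n$.

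Second, with the $V_n$ in hand, I would run the greedy algorithm \emph{over the index $n$ rather than over the vertices}. Process the Borel layers $V_0, V_1, V_2, \ldots$ in order; when coloring a vertex $x \in V_n$, assign it the least color in $\set{1, \ldots, d+1}$ not appearing among its already-colored neighbors---i.e., its neighbors lying in $V_0 \cup \cdots \cup V_{n-1}$. Since $x$ has at most $d$ neighbors, at most $d$ colors are forbidden, so a color in $\set{1,\ldots,d+1}$ is always available, giving a proper $(d+1)$-coloring. The crucial point is that each of these steps is Borel: the set of vertices in $V_n$ receiving color $c$ is defined by a Borel condition on the (Borel) lower layers and the (Borel) neighbor relation, so by induction on $n$ every color class $\set{x : x \text{ gets color } c}$ is Borel. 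Taking the union over all $n$ keeps each color class Borel, and we conclude $\chi_\mathsf{B}(G) \leq d+1$.

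\textbf{Main obstacle.} The genuinely delicate step is the first one: producing the Borel partition $V(G) = \bigsqcup_n V_n$ into countably many independent Borel sets, i.e., showing $\chi_\mathsf{B}(G) \leq \aleph_0$ to begin with. Everything after that is a routine Borel induction, but this initial countable Borel coloring is where descriptive-set-theoretic machinery (Luzin--Novikov uniformization of the countable-section edge relation, or an explicit use of the Borel linear order to index neighbors) is essential. Once the layers exist, reducing from $\aleph_0$ to $d+1$ colors is the easy greedy pass described above; the content of the theorem is really that the first, infinite-palette coloring can be carried out Borel-measurably on any bounded-degree Borel graph.
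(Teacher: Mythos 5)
First, a point of order: the paper itself contains no proof of Theorem~\ref{theo:KST} --- it is quoted from \cite{KST} --- so your proposal can only be compared with the standard Kechris--Solecki--Todorcevic argument. Your proposal tracks that argument's architecture exactly: first partition $V(G)$ into countably many Borel $G$-independent layers $V_0, V_1, V_2, \ldots$, then make a greedy pass over the layer index, giving each $x \in V_n$ the least color in $\set{1,\ldots,d+1}$ not used by its neighbors in lower layers. Your second stage is complete and correct: by induction on $n$, the color classes inside $V_0 \cup \cdots \cup V_n$ are Borel \ep{using that ``$x$ has a neighbor of color $c$'' is a Borel condition, e.g.\ via the Borel neighbor functions}, and the degree bound guarantees a free color.

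The gap is in your first stage, and it is exactly at the point you yourself flag as the crux. Borel-labeling the neighbors of each vertex by $1,\ldots,d$ --- that is, producing Borel partial functions $f_1,\ldots,f_d$ whose values at $x$ enumerate the neighbors of $x$ --- is indeed available via Luzin--Novikov uniformization, but this is a purely local structure, and the sentence ``iterating this neighbor-indexing yields a Borel function assigning to each vertex a natural number so that adjacent vertices get distinct numbers'' is not a proof: it is a restatement of the claim $\chi_\mathsf{B}(G) \leq \aleph_0$, with no mechanism supplied. Nothing in the indexing distinguishes $x$ from $f_i(x)$, so no ``iteration'' of it can break the symmetry between adjacent vertices. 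The standard way to close the hole uses the topology (a countable point-separating family), with the neighbor functions playing only a supporting role. If $V(G)$ is countable, every subset is Borel and the theorem is immediate; otherwise, use the Borel Isomorphism Theorem to fix a Borel bijection $V(G) \to \set{0,1}^\N$, and let $U_0, U_1, U_2, \ldots$ enumerate the pullbacks of the basic clopen cylinders. Given $x$ with neighbors $y_1, \ldots, y_k$ ($k \leq d$), each $y_i$ differs from $x$, so some cylinder determined by an initial segment of the code of $x$ excludes $y_i$; since these cylinders form a decreasing chain, one of them contains $x$ and none of its neighbors. Define $c_0(x)$ to be the least $n$ such that $x \in U_n$ and no neighbor of $x$ lies in $U_n$. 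If $x \sim y$ and $c_0(x) = c_0(y) = n$, then $y \in U_n$ because $c_0(y) = n$, yet $y \notin U_n$ because $y$ is a neighbor of $x$ --- a contradiction, so each $V_n \defeq c_0^{-1}(n)$ is $G$-independent. Borelness of $c_0$ is where your functions $f_1, \ldots, f_d$ genuinely earn their keep: the set $\set{x \,:\, \text{some neighbor of $x$ is in } U_n}$ equals $\bigcup_{i=1}^d f_i^{-1}(U_n)$, hence is Borel. (An alternative completion, also from \cite{KST}, avoids the separating family but needs a harder lemma: the graph of a single Borel function has Borel chromatic number at most $3$, so the product coloring over $f_1,\ldots,f_d$ gives a Borel $3^d$-coloring.) With either paragraph substituted for the ``iteration'' claim, your proof becomes the standard one.
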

    
    On the other hand, reducing the number of colors to $d$ is impossible even for graphs with no cycles:
    
    \begin{theo}[{Marks \cite{Marks}}]\label{theo:marks}
        For any $d \in \N$, there is an acyclic Borel graph $G$ of maximum degree $d$ such that $\chi_{\mathsf{B}}(G) > d$.
    \end{theo}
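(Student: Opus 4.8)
The plan is to follow Marks's game-theoretic method \cite{Marks}, whose engine is \emph{Borel determinacy} (Martin's theorem: every two-player game of perfect information with a Borel payoff set is determined). First I would realize a concrete acyclic $d$-regular Borel graph as a Schreier graph. Let $\Gamma$ be the free product $\Z/2\Z \ast \cdots \ast \Z/2\Z$ of $d$ copies of $\Z/2\Z$, with involutive generators $s_1, \ldots, s_d$; its Cayley graph is the $d$-regular tree, so it is acyclic. Let $\Gamma$ act on $X = \Free(2^\Gamma)$, the free part of the Bernoulli shift on $2^\Gamma$, so that the induced Schreier graph $G$, joining each $x$ to $s_i \cdot x$, is a Borel acyclic graph of degree $d$. (The nonfree points are discarded; freeness is what guarantees that each component of the Schreier graph is genuinely the $d$-regular tree.) It then suffices to show $\chi_\mathsf{B}(G) > d$; combined with Theorem~\ref{theo:KST}, this in fact pins the value at $d+1$.

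Assume toward a contradiction that there is a Borel proper $d$-coloring, i.e.\ a Borel partition $X = A_0 \sqcup \cdots \sqcup A_{d-1}$ into $G$-independent sets. For a fixed generator $s = s_i$, I would set up a two-player game $G_s$ in which the players cooperatively reveal, bit by bit, the coordinates of a point $x \in X$: Player~I reveals the coordinates $x(\gamma)$ for reduced words $\gamma$ \emph{not} beginning with $s$ (the ``$x$-side'' of the edge $\{x, s\cdot x\}$), while Player~II reveals the coordinates for words beginning with $s$ (the ``$s\cdot x$-side''). Since $s$ is an involution, these two halves are disjoint and exhaust $\Gamma$, and the relabeling $\gamma \mapsto s\gamma$ is an isomorphism of the game that swaps the roles of I and II while transforming the constructed point $x$ into $s\cdot x$. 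Because the coloring is Borel, the map $x \mapsto (c(x), c(s\cdot x))$ is Borel, so every payoff set built from it is Borel and Martin's theorem applies: each such game is determined.

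The key mechanism is the \emph{composition of winning strategies across an edge}. Suppose that for some generator $s$ and color $j$ Player~I has a strategy forcing $c(x) = j$ (no matter how II plays). Applying the involution symmetry of the previous paragraph, Player~II then has a strategy forcing $c(s\cdot x) = j$. Crucially, I's strategy acts only on the $x$-side coordinates and II's only on the disjoint $s\cdot x$-side coordinates, so the two can be run \emph{simultaneously} to produce a single point $x$; by construction $c(x) = j = c(s\cdot x)$, a monochromatic edge of $G$, contradicting that $A_j$ is $G$-independent. Hence a proper $d$-coloring can exist only if, for \emph{every} generator $s$ and \emph{every} color $j$, Player~I fails to force $c(x) = j$ — equivalently, by determinacy, Player~II can force $c(x) \neq j$.

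The main obstacle is this final counting step, which is exactly where the matching numbers ``$d$ generators'' and ``$d$ colors'' must be exploited. The disjoint ``$s_i$-sides'' for the $d$ generators are precisely the $d$ subtrees hanging off the root, and together they partition $\Gamma \setminus \{e\}$. I would argue that the family of Player~II evasion strategies — one forbidding each color in each of the $d$ directions — cannot be jointly consistent: assembling them across the $d$ directions would have to assign the root a color that is avoided from every direction, and a pigeonhole between the $d$ colors and the $d$ coordinate-directions shows this is impossible. This forces the hypothesis of the previous paragraph to fail, producing a generator $s$ and color $j$ for which Player~I does force $c(x) = j$, and hence the monochromatic edge. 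Turning this heuristic into an honest argument — defining the payoff sets so that the evasion strategies genuinely combine and the pigeonhole bites — is the technical heart of the proof; the determinacy theorem and the freeness of the shift action are, by comparison, routine inputs.
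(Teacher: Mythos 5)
The paper you are working from does not actually prove Theorem~\ref{theo:marks}; it only cites Marks \cite{Marks}. So your proposal has to be judged against Marks's determinacy argument itself, and structurally it is a faithful reconstruction of it: the free product of $d$ copies of $\Z/2\Z$, the split of the coordinates of a shift point according to whether the reduced word begins with $s$, the mirroring symmetry $\gamma \mapsto s\gamma$ that converts a Player~I strategy forcing $c(x)=j$ into a Player~II strategy forcing $c(s\cdot x)=j$, the resulting monochromatic edge, and---when Player~I never wins---the diagonal combination of $d$ Player~II evasion strategies (color $i$ paired with generator $s_i$), whose coordinate regions partition $\Gamma\setminus\set{e}$ and can therefore be played against one another. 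One logical simplification: the point $y$ produced in your last step satisfies $c(y)\neq i$ for \emph{every} $i$, which already contradicts the totality of the coloring; there is no need to argue that the strategies are ``jointly inconsistent'' and then route back through a Player~I win.

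The genuine gap is the freeness problem, which you explicitly set aside as a ``routine input.'' It is not. Your games jointly construct an arbitrary element of $2^\Gamma$, and nothing guarantees that the points obtained by playing strategies against each other lie in $\Free(2^\Gamma)$---the only place where $c$ is defined and where $x \neq s\cdot x$, so that a monochromatic pair is actually an edge. The payoff condition ``$c(x)=j$'' is thus not even well defined on all plays, and however you extend it to non-free points, one of your two contradictions degenerates into ``\ldots{}or the composed point is non-free,'' which is no contradiction at all. Over the alphabet $\set{0,1}$ this cannot be repaired: left multiplication by $s$ pairs each Player~I coordinate $\gamma$ with the Player~II coordinate $s\gamma$, whose word length is one greater, so under the natural word-length schedule Player~II sees $x(\gamma)$ before playing $x(s\gamma)$ and can simply copy that bit, forcing $s\cdot x = x$. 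Hence in each of your games one player can force a non-free outcome, and the game is then won trivially by whichever player benefits from non-freeness---both compositions collapse.

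This is precisely why Marks runs the construction on $\N^\Gamma$ rather than $2^\Gamma$: over an infinite alphabet the games can be arranged so that every play is automatically an injective function $\Gamma\to\N$, hence a free point (one clean implementation: reserve pairwise disjoint infinite pools of allowed values for the distinct coordinates), and then every composition of strategies you describe stays inside the free part. No such device exists over a two-letter alphabet, by pigeonhole. The repaired argument proves $\chi_\mathsf{B}>d$ for the Schreier graph of $\Free(\N^\Gamma)$, which is all that the stated theorem requires; if you insist on $\Free(2^\Gamma)$, you need an additional transfer, e.g.\ pulling a hypothetical Borel $d$-coloring of $\Free(2^\Gamma)$ back along a Borel equivariant map $\Free(\N^\Gamma)\to\Free(2^\Gamma)$ as provided by Seward--Tucker-Drob \cite{STD}. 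The remaining issues you flag---choosing the move schedule so that the mirroring is a genuine isomorphism of the game with the players swapped, and so that the $d$ evasion strategies dovetail---are real but are resolved in \cite{Marks} by a careful choice of the order in which coordinates are revealed; they are bookkeeping, whereas the freeness mechanism is a load-bearing idea your sketch is missing.
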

    
    However, if we only want a measurable or Baire-measurable coloring, then $d$ colors suffice:
    
    \begin{theo}[{Conley--Marks--Tucker-Drob \cite{CMTD}}]\label{theo:yesdmeas}
        If $G$ is a Borel graph of maximum degree $d$, then both $\chi_\mathsf{M}(G)$ and $\chi_{\mathsf{BM}}(G)$ are at most $d$, provided that $d \geq 3$ and $G$ has no $(d+1)$-cliques.
    \end{theo}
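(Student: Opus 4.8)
The plan is to bootstrap from the Borel $(d+1)$-coloring supplied by Theorem~\ref{theo:KST} and then recolor away the surplus color measurably, at the cost of a null (respectively meager) set. So I would fix a Borel proper coloring $c_0 \colon V(G) \to \set{1, \ldots, d+1}$ and set $B_0 \defeq c_0^{-1}(d+1)$, a Borel independent set. The entire task reduces to driving the \emph{defect set}---the vertices still using color $d+1$ in the current coloring---down to a set of measure (category) zero, after which those negligibly many vertices can simply retain their original color $c_0$, since measurable and Baire-measurable solutions are permitted to misbehave on a null, respectively meager, set.

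The engine is the local recoloring move from the classical proof of Brooks's theorem. Given a defect vertex $v$, inspect the colors of its $\leq d$ neighbors. If some color $i \in \set{1, \ldots, d}$ is absent among them, recolor $v$ to $i$; this is a radius-$1$ move that deletes $v$ from the defect set and creates no new defect. The only obstruction is the configuration in which $v$ has exactly $d$ neighbors realizing all $d$ colors. Here one passes to a \emph{Kempe chain}: the connected component, through a neighbor of $v$, of the subgraph two-colored by $\set{i,j}$, and swaps $i \leftrightarrow j$ along it to liberate color $i$ at $v$. This again introduces no new use of color $d+1$. The hypotheses $d \geq 3$ and the absence of a $(d+1)$-clique are precisely what the Brooks analysis needs in order to guarantee that \emph{some} such augmenting move is available at every defect vertex of a connected non-clique component.

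To make the procedure measurable I would iterate it in rounds, confining attention to moves that act within a bounded radius $r$. In each round, form the Borel set $U$ of defect vertices admitting such a short recoloring, build an auxiliary Borel ``conflict graph'' of bounded degree whose edges join moves whose $r$-balls overlap, and extract a Borel maximal independent set $M \subseteq U$ of pairwise non-interfering moves by a KST-type greedy argument. Applying all moves in $M$ simultaneously is consistent precisely because they do not interact. Since the conflict graph has degree bounded by a function of $d$ and $r$, the set $M$ dominates $U$ and hence $\mu(M) \geq \mu(U)/C$ for a constant $C = C(d,r)$, so in the measurable case the fixable part of the defect set shrinks by a fixed proportion each round (the Baire-category argument instead drives the fixable defects into a meager set). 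All activity in round $n$ lies within the $r$-neighborhood $A_n$ of the current defect set; if the defect measures decay geometrically, then $\sum_n \mu(A_n) < \infty$, and a Borel--Cantelli argument shows that off the null set $\limsup_n A_n$ every vertex is touched only finitely often, so its color stabilizes and the limiting coloring is a genuine measurable proper $d$-coloring.

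The main obstacle is exactly the Kempe-chain step: the chains can be arbitrarily long, or even bi-infinite, so a radius-$r$ move need not exist at a given defect vertex, and an unbounded swap cannot be performed consistently across the whole space at once. Thus the crux is to show that the \emph{persistently stuck} vertices---those retaining color $d+1$ with all $d$ colors around them and no short augmenting chain---form a null (meager) set in the limit, so that the geometric decay above is not stalled. This is where one must marry the combinatorial input with the ambient regularity: a positive-measure (non-meager) family of stuck vertices would, by the Brooks dichotomy, be forced locally into a structure indistinguishable from a disjoint union of $(d+1)$-cliques, contradicting the no-$(d+1)$-clique hypothesis. Establishing this negligibility rigorously, uniformly over the evolving coloring, is the delicate heart of the proof; everything else is the bounded-degree bookkeeping sketched above.
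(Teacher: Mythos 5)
Your own diagnosis of the proposal is correct, and that diagnosis is fatal: everything reduces to showing that the \emph{persistently stuck} vertices are null (meager), and the justification you offer for this step is false. Being stuck for radius-$r$ moves is a property of the properly colored radius-$(r+1)$ ball around the vertex, whereas the ``Brooks dichotomy'' you invoke (if \emph{every} Kempe swap fails, the component is a clique or odd cycle) is a global statement obtained by exhausting \emph{all} Kempe chains in a finite connected graph; failure of all \emph{short} chains forces no structure whatsoever. Concretely, an infinite $d$-regular tree ($d \geq 3$, so no $(d+1)$-cliques and no cliques at all) contains properly $(d+1)$-colored balls of every radius realizing the stuck pattern---center colored $d+1$, all $d$ colors on its neighbors, every relevant two-colored chain running out of the ball---and such patterns can occur on a positive-measure (or non-meager) set of vertices of an acyclic Borel graph carrying a measurable $(d+1)$-coloring. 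So a positive-measure stuck set is perfectly consistent with clique-freeness, the claimed contradiction never materializes, and the geometric decay driving your Borel--Cantelli argument can stall. Marks's Theorem~\ref{theo:marks} is structural evidence that no local repair scheme of this kind can work without using measure or category in an essential way: there exist \emph{acyclic} Borel graphs of maximum degree $d$ with $\chi_\mathsf{B}(G) > d$, yet every stage of your procedure (the Theorem~\ref{theo:KST} coloring, the conflict graph, the Borel MIS, the bounded-radius swaps) is purely Borel---measure enters only in the final discard of a null set and in the unproven claim. (Two smaller slips: disjointness of $r$-balls does not make simultaneous swaps compatible, since two swapped chains that are merely \emph{adjacent} can create an improper edge, so the conflict graph must join moves with overlapping $(r+1)$-balls; and swapping the $\set{i,j}$-chain through a neighbor frees color $i$ at $v$ only if that chain avoids $v$'s $j$-colored neighbor.)

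For comparison, the paper does not argue combinatorially at all: it obtains Theorem~\ref{theo:yesdmeas} as a special case of the transfer Theorem~\ref{theo:dist_to_desc}\ref{item:b} applied to the $O((\log \log n)^2)$-round randomized \LOCAL algorithm of Ghaffari--Hirvonen--Kuhn--Maus (Theorem~\ref{theo:yesd}); all of the measure-theoretic and category-theoretic content is concentrated in the proof of the transfer theorem, which rests on the measurable Local Lemma. The original Conley--Marks--Tucker-Drob proof \cite{CMTD} is likewise built on a global tool (one-ended spanning subforests constructed almost everywhere), not on iterated local recoloring. Either way, the step you left open is exactly where a genuinely new, non-local ingredient must be imported, and without it the proposal does not constitute a proof.
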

    
    Theorems~\ref{theo:GPS}--\ref{theo:yesd} and \ref{theo:KST}--\ref{theo:yesdmeas} clearly mirror each other. It seems that deterministic \LOCAL algorithms somehow correspond to Borel constructions, while randomized algorithms---to measurable and Baire-measurable ones. It turns out that, indeed, this is not a coincidence and one can prove some general connections of this form.
    
    \subsection{Efficient distributed algorithms yield descriptive results}\label{subsec:dist_to_disc}
    
    In \cite{BerDist}, the following general result is established:
    
    \begin{theo}[{Bernshteyn \cite{BerDist}}]\label{theo:dist_to_desc}
        Let $\mathcal{G}$ be a class of finite graphs closed under adding isolated vertices and let $G$ be a Borel graph of bounded degree all of whose finite induced subgraphs are in $\mathcal{G}$. Fix an LCL problem $\Pi$.
        
        \begin{enumerate}[label=\ep{\itshape{\alph*}}]
            \item\label{item:a} If $\Pi$ can be solved on $n$-vertex graphs from $\mathcal{G}$ by an $o(\log n)$-round deterministic \LOCAL algorithm, then $G$ admits a Borel solution to $\Pi$.
            
            \item\label{item:b} If $\Pi$ can be solved on $n$-vertex graphs from $\mathcal{G}$ by an $o(\log n)$-round randomized \LOCAL algorithm, then $G$ admits both a measurable and a Baire-measurable solution to $\Pi$.
        \end{enumerate}
    \end{theo}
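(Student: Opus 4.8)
The plan is to run the \LOCAL algorithm directly on $G$, exploiting the degree bound $d$ together with the $o(\log n)$ round bound to keep each vertex's view negligible next to the ambient size $n$. For a parameter $n$ write $T \defeq T(n)$ for the round complexity and let $B_r(v)$ denote the set of vertices at graph distance at most $r$ from $v$. Since $T = o(\log n)$, we have $d^{T+2} = n^{o(1)}$, so $|B_{T+1}(v)| < n$ for all sufficiently large $n$; fix such an $n$ from now on.

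The first step is to manufacture Borel identifiers. Applying Theorem~\ref{theo:KST} to the power graph $G^{2T+2}$ \ep{of degree at most $d^{2T+2} = n^{o(1)}$} yields a Borel proper coloring in which any two vertices at distance at most $2T+2$ get distinct colors. There are only $n^{o(1)}$ colors, so I may encode them by distinct bit strings and pad these to length $\Theta(\log n)$, obtaining a Borel map $\mathrm{Id} \colon V(G) \to \set{0,1}^{\Theta(\log n)}$ whose restriction to every ball $B_{T+1}(v)$ is injective. For each $v$, let $\tilde H_v$ be $G[B_{T+1}(v)]$ padded with isolated vertices to exactly $n$ vertices---this belongs to $\mathcal{G}$ by the closure hypothesis---and extend $\mathrm{Id}$ over the padding to a globally distinct assignment, which is possible as $2^{\Theta(\log n)} \geq n$.

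Now define the label of $v$ to be the output the deterministic algorithm assigns to $v$ when run on $\tilde H_v$. As the algorithm is $T$-local and $B_T(v)$ lies inside $\tilde H_v$ carrying its globally fixed identifiers, this output is a fixed function of the labeled radius-$T$ ball of $v$, hence Borel. For correctness, observe that for any edge $uv$ both endpoints' radius-$T$ balls sit inside $\tilde H_v$; since the algorithm returns a valid solution on $\tilde H_v$, and there the outputs at $u$ and $v$ agree with the labels defined above \ep{both balls being fully realized with the same identifiers}, and every constraint of $\Pi$ is verified within radius one, it holds at $uv$. This proves part~\ref{item:a}.

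For part~\ref{item:b} I keep this scaffolding but additionally equip each vertex with an independent infinite string of random bits---assigned measurably in the first instance, Baire-generically in the second---so that the algorithm's output becomes a measurable \ep{resp.\ Baire-measurable} labeling with $\P[\Pi \text{ fails at } v] \leq 1/n$ for every $v$. By Fubini the failure set has expected measure at most $1/n$, so for each $n$ some choice of the random bits gives a measurable labeling whose failure set has measure $\leq 1/n$ \ep{and a category analogue gives one failing only on a meager set}. The real difficulty, and the step I expect to be the crux, is to upgrade this family of near-solutions into a single labeling correct on a conull \ep{resp.\ comeager} set: a fixed $n$ leaves a positive-measure error, while naively sending $n \to \infty$ yields incompatible labelings, since the constraints of $\Pi$ couple neighboring vertices and so forbid independently repairing the error region. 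The plan is to use independent randomness along a summable subsequence $(n_k)$, so that Borel--Cantelli makes almost every vertex error-free for all large $k$, and then to extract a limit in the compact space $\Lambda^{V(G)}$, in which ``$\Pi$ holds at $v$'' is a clopen condition on finitely many coordinates; arranging that this limit is simultaneously measurable \ep{resp.\ Baire-measurable} and valid off a null \ep{resp.\ meager} set is the heart of the matter.
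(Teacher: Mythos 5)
Your proof of part \ref{item:a} is correct and is essentially the paper's own argument: Borel ``identifiers'' are produced by applying Theorem~\ref{theo:KST} to the auxiliary graph in which vertices at distance at most $2T+2$ are made adjacent, and the deterministic algorithm is then simulated locally. Your padded graphs $\tilde H_v$ simply make explicit the paper's phrase ``run $\mathcal{A}$ on $G$ as if $G$ had $n$ vertices,'' and your use of closure under adding isolated vertices is exactly what that hypothesis is for.

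Part \ref{item:b} is where the proposal has a genuine gap---one you candidly flag yourself, but it is the heart of the theorem, and the route you propose for closing it would not work. First, ``equip each vertex with an independent infinite string of random bits, assigned measurably'' is not a well-defined operation: a single measurable map on the uncountable set $V(G)$ cannot have genuinely i.i.d.\ values, so even the Fubini step requires a different formalization \ep{one must pass to an extension of the space that carries the randomness, and then the entire difficulty becomes descending back to a labeling of $V(G)$ itself}. Second, the Borel--Cantelli/compactness plan fails: knowing that almost every vertex is error-free for all large $k$ does not make the labels stabilize---the output of run $n_k$ at a given vertex can keep changing with $k$---so there need not be any pointwise limit; you cannot extract one by diagonalization because $V(G)$ is uncountable, and a subnet limit in the compact space $\Lambda^{V(G)}$ destroys measurability. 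Reconciling the incompatible near-solutions is exactly the coupled-constraints problem you identify, and it does not dissolve in the limit.

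The paper's actual proof of \ref{item:b} uses a different mechanism: the measurable version of the Lov\'asz Local Lemma, also established in \cite{BerDist}, whose proof in turn relies on the efficient randomized distributed algorithm for the Local Lemma due to Fischer and Ghaffari \cite{FG}. The small per-vertex failure probability supplied by the $o(\log n)$-round randomized algorithm is fed into that measurable Local Lemma, which directly produces a single measurable \ep{respectively, Baire-measurable} solution, rather than a sequence of near-solutions to be glued together afterwards.
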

    
    In short, efficient deterministic algorithms yield Borel solutions, while efficient randomized algorithms yield measurable and Baire-measurable solutions. The following implications are special cases of Theorem~\ref{theo:dist_to_desc}:
    \begin{align*}
        \text{Theorem~\ref{theo:GPS}} \ &\Longrightarrow\ \text{Theorem~\ref{theo:KST}},\\
        \text{Theorem~\ref{theo:yesd}} \ &\Longrightarrow\ \text{Theorem~\ref{theo:yesdmeas}},\\
        \text{Theorem~\ref{theo:marks}} \ &\Longrightarrow\ \text{Theorem~\ref{theo:nod}}.
    \end{align*}
    
    In addition to its theoretical significance, Theorem~\ref{theo:dist_to_desc} has a number of applications to specific problems. For instance, if we already know an efficient \LOCAL algorithm for some LCL problem, we can then use it to derive corresponding results in descriptive combinatorics. Several instances of this are given in \cite{BerDist}. For example, Theorem~\ref{theo:dist_to_desc}, together with known distributed algorithms due to Chung, Pettie, and Su \cite{CPS}, is used in \cite[\S3.B]{BerDist} to obtain asymptotically optimal upper bounds on the measurable chromatic number of Borel graphs without short cycles. In the opposite direction, impossibility results in descriptive combinatorics yield lower bounds on the running time of distributed algorithms. In a recent paper \cite{trees}, Brandt \emph{et al.} used this idea to derive new lower bounds for distributed algorithms via a version of the so-called determinacy method that was originally developed by Marks to prove Theorem~\ref{theo:marks}.
    
    Let us now say a few words about the proof of Theorem~\ref{theo:dist_to_desc}. The proof of part \ref{item:a} is actually not too difficult:
    
    \begin{proof}[Proof of Theorem~\ref{theo:dist_to_desc}\ref{item:a} \ep{sketch}]
        Suppose that $\Pi$ is solved on $n$-vertex graphs from $\mathcal{G}$ by an $o(\log n)$-round deterministic \LOCAL algorithm $\mathcal{A}$. The idea is to simulate the execution of the algorithm $\mathcal{A}$ on the given Borel graph $G$ by ``pretending'' that $G$ is finite. 
        
        Let $d$ be the maximum degree of $G$ \ep{which is finite by assumption}. Take a very large natural number $n$ and let $T \ll \log n$ be the running time of $\mathcal{A}$ on $n$-vertex graphs. Let $S$ be the set of all bit strings of length $\lceil \log_2 n\rceil$. 
        
        \begin{claim*}
            There is a Borel labeling $\mathrm{Id} \colon V(G) \to S$ such that $\mathrm{Id}(x) \neq \mathrm{Id}(y)$ whenever $x \neq y$ and the graph distance between $x$ and $y$ is at most $2T+2$.
        \end{claim*}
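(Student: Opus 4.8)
The plan is to recognize the claim as a request for a Borel proper coloring of a \emph{power} of $G$. Define an auxiliary graph $H$ on the same vertex set $V(G)$ by declaring distinct $x, y$ adjacent in $H$ precisely when $1 \leq \dist_G(x,y) \leq 2T+2$, where $\dist_G$ denotes graph distance in $G$. A proper coloring of $H$ is exactly a labeling that separates any two distinct vertices lying within distance $2T+2$ of each other, so it suffices to produce a Borel proper coloring of $H$ using at most $|S|$ colors and then identify the color classes with distinct strings in $S$. Since $G$ is Borel of bounded degree, I expect $H$ to again be a Borel graph of bounded degree, at which point Theorem~\ref{theo:KST} furnishes a Borel proper coloring of $H$ with $\Delta(H)+1$ colors essentially for free; the entire content of the claim then reduces to checking that $\Delta(H)+1 \leq |S|$.

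First I would verify that $H$ is Borel. As noted in the discussion of perfect matchings, one can fix a Borel ordering of the neighbors, so for each $i \leq d$ the ``$i$-th neighbor'' map $\nu_i \colon V(G) \pto V(G)$ is a Borel partial function. A vertex $y$ lies within distance $k$ of $x$ if and only if $y = \nu_{i_1}(\nu_{i_2}(\cdots \nu_{i_m}(x) \cdots))$ for some word $i_1 \cdots i_m$ of length $m \leq k$, and there are only finitely many such words. Hence $\set{(x,y) \,:\, \dist_G(x,y) \leq k}$ is a finite union of graphs of compositions of Borel functions, so the edge set of $H$ is Borel. The same bookkeeping controls the degree: the number of vertices within distance $2T+2$ of any fixed $x$, other than $x$ itself, is at most $\sum_{m=1}^{2T+2} d^m \leq d^{2T+3}$, so $H$ has maximum degree $D \defeq d^{2T+3}$ and is a bona fide bounded-degree Borel graph.

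It remains to confirm that the palette is large enough, which I view as the crux of the argument. By Theorem~\ref{theo:KST} we have $\chi_\mathsf{B}(H) \leq D + 1 \leq d^{2T+3} + 1$, and I must check this is at most $|S| = 2^{\lceil \log_2 n \rceil} \geq n$. This is precisely where the hypothesis $T \ll \log n$ enters: since $2T+3 = o(\log n)$, we get $d^{2T+3} = 2^{(2T+3)\log_2 d} = 2^{o(\log n)} = n^{o(1)}$, which is $\ll n$ for all sufficiently large $n$. Thus $D + 1 \leq n \leq |S|$ once $n$ is large, so the at most $D+1$ color classes of a Borel proper coloring of $H$ can be assigned pairwise distinct labels from $S$; the resulting map $\mathrm{Id} \colon V(G) \to S$ is Borel and separates all pairs at distance $\leq 2T+2$, as required. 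The main obstacle is therefore not topological---Borelness of $H$ is routine given the Borel neighbor ordering, and the degree bound is elementary---but rather the quantitative check that the exponential blow-up $d^{2T+3}$ stays sub-linear in $n$, which hinges entirely on the sub-logarithmic running time $T$.
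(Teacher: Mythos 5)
Your proposal is correct and follows essentially the same route as the paper: both define the auxiliary ``distance-$(2T{+}2)$ power'' graph on $V(G)$, observe it is Borel with maximum degree roughly $d^{2T+2}$ (hence less than $n$ because $T \ll \log n$), and then invoke Theorem~\ref{theo:KST} to obtain a Borel coloring with at most $|S|$ colors. The extra details you supply---Borelness of the power graph via a Borel ordering of neighbors, and the explicit quantitative check $d^{2T+3} = n^{o(1)} < n$---are exactly the steps the paper's sketch leaves implicit.
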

        \begin{claimproof}[Proof of the claim]
            Define an auxiliary graph $G'$ with the same vertex set as $G$ where distinct vertices $x$ and $y$ are adjacent if and only if their graph distance in $G$ is at most $2T+2$. It is not hard to see that the graph $G'$ is Borel. Since $|S| \geq n$, we just need to find a Borel $n$-coloring of $G'$. The maximum degree of $G'$ is at most $d^{2T+2}$, which is less than $n$ since $2T+2 \ll \log n$. Therefore, by Theorem~\ref{theo:KST}, $G'$ has a Borel $n$-coloring, as desired.
        \end{claimproof}
        
        We treat the bit string $\mathrm{Id}(x)$ as a substitute for an identifier of $x$. By construction, although there may be other vertices with the same identifier, they are far from $x$ in $G$. In particular, if $x$ explores its radius-$T$ ball in $G$, it will only see distinct identifiers. Since the algorithm $\mathcal{A}$ applied on $n$-vertex graphs doesn't allow  a vertex to see outside its radius-$T$ ball, we many run $\mathcal{A}$ on $G$ for $T$ rounds \ep{as if $G$ had $n$ vertices} using the function $\mathrm{Id}$ in place of the identifier assignment. The output labeling will then be a solution to $\Pi$, and, since $\mathrm{Id}$ is a Borel assignment, one can show that the output will also be Borel.
    \end{proof}
    
    Arguments similar to the above proof sketch are common in distributed computing theory. For example, an analogous approach was used by Chang, Kopelowitz, and Pettie in \cite{CKP} to prove that no LCL problem can have deterministic \LOCAL complexity in the interval between $\omega(\log^\ast n)$ and $o(\log n)$.
    
    The proof of Theorem~\ref{theo:dist_to_desc}\ref{item:b} is quite a bit more involved, and we won't venture to explain it here. Let us just mention one interesting feature of it. The key tool used to prove Theorem~\ref{theo:dist_to_desc}\ref{item:b} is the measurable version of the Local Lemma, which is also established in \cite{BerDist}. The Local Lemma is a well-known powerful tool in probabilistic combinatorics that is often used to show that a given LCL problem has a solution. The measurable version of this lemma additionally guarantees that the solution is measurable. And here we encounter another point of interaction between descriptive combinatorics and distributed computing: the proof of the measurable Local Lemma in \cite{BerDist} crucially relies on the efficient randomized \LOCAL algorithm for the Local Lemma developed by Fischer and Ghaffari \cite{FG}.
    
    \subsection{Perfect harmony between the two worlds: continuous solutions}
    
    Theorem~\ref{theo:dist_to_desc} allows one to turn efficient distributed algorithms into results in descriptive combinatorics. It is natural to ask if some sort of converse to Theorem~\ref{theo:dist_to_desc} also holds, i.e., if we can obtain efficient algorithms from descriptive results. While in general this question remains widely open, there is one salient case in which an exact correspondence between the descriptive and the distributed worlds has been established. This case concerns continuous solutions to LCL problems.
    
    \begin{defn}[Continuous solutions to LCL problems]\label{defn:contLCL}
        Let $\Pi$ be an LCL problem with label set $\Lambda$ and let $G$ be a Borel graph. A labeling of the vertices of $G$ by the labels from $\Lambda$ is a \emph{continuous solution} to $\Pi$ if it fulfills all the constraints of the problem $\Pi$ and, additionally, for each label $\lambda \in \Lambda$, the set of all vertices labeled $\lambda$ is open.
    \end{defn}
    
    Note that the set of all vertices receiving a given label in a continuous solution must be both open and closed---\emph{clopen}, in short. Hence, it is only interesting to study continuous solutions to LCL problems on graphs $G$ such that the space $V(G)$ has many clopen subsets. Specifically, we focus on \emph{zero-dimensional} spaces, i.e., spaces whose topology is generated by clopen sets. Even though familiar spaces such as $\R$ do not have any nontrivial clopen sets, zero-dimensional spaces are rather common. For example, the space $\set{0,1}^\N$ is zero-dimensional. Another example of a zero-dimensional Polish space is $\R \setminus \Q$. Also, given any Polish space $X$, it is possible to refine the topology on $X$ to make it zero-dimensional without changing the Borel sets \cite[\S13]{KechrisDST}.
    
    The main result we want to describe in this section says, roughly, that an LCL problem can be solved continuously if and only if it can be solved via an efficient deterministic distributed algorithm. To make this statement precise, we need to specify a particular graph or a class of graphs on which we attempt to solve the problem. While it is possible to make the statement somewhat more general, it will be convenient to consider classes of graphs that look very symmetric, such as regular trees or multi-dimensional grids. The following general definition includes these examples as special cases:
    
    \begin{defn}[$\Gamma$-graphs]
        Let $\Gamma$ be a group generated by a fixed finite symmetric\footnote{A subset $S \subseteq \Gamma$ is \emph{symmetric} if for all $\gamma \in S$, $\gamma^{-1} \in S$ as well.} set $S \subseteq \Gamma$.
        
        The \emph{Cayley graph} $\mathrm{Cay}(\Gamma)$ of $\Gamma$ corresponding to $S$ is the graph with vertex set $\Gamma$ that includes, for every $\gamma \in \Gamma$ and $\sigma \in S$, an edge from $\gamma$ to $\sigma\gamma$. To be precise, we view $\mathrm{Cay}(\Gamma)$ as an edge-labeled graph, where the label of the edge $(\gamma, \sigma \gamma)$ is $\sigma$, but the reader may safely ignore this technicality. 
        
        A \emph{$\Gamma$-graph} is a graph \ep{with edge labels} in which every connected component is isomorphic to $\mathrm{Cay}(\Gamma)$.
    \end{defn}
    
    For instance, the Cayley graph of the additive group $\Z$ with generating set $S = \set{1,-1}$ is a bi-infinite path. Similarly, the Cayley graph of $\Z^d$ with respect to an appropriately chosen generating set is an infinite $d$-dimensional square grid. On the other hand, the Cayley graph of the free group $\F_d$ of rank $d$ is an infinite $d$-regular tree.
    
    Given any group $\Gamma$ with a finite symmetric generating set $S$, there is a canonical way to define a particular $\Gamma$-graph $\mathbf{S}_\Gamma$ on a zero-dimensional vertex set, called the \emph{shift graph} of $\Gamma$. The shift graph $\mathbf{S}_\Gamma$ is an extremely important example that occupies a central place not only in descriptive combinatorics, but also in such areas as ergodic theory and topological dynamics. Before giving the general definition, it will be instructive to consider the special case $\Gamma = \Z$, with generating set $S = \set{1, -1}$. For a subset $A \subseteq \Z$ and an element $n \in \Z$, we write
    \[
        A + n \,\defeq\, \set{m + n \,:\, m \in A}.
    \]
    The set $A + n$ can naturally be seen as a ``shift'' of $A$ by $n$ (hence the term ``shift graph''). We say that a set $A \subseteq \Z$ is \emph{aperiodic} if $A + n \neq A$ for every non-zero $n \in \Z$. This is equivalent to saying that the characteristic function $\mathbf{1}_A$ of $A$ is not periodic, i.e., there is no integer $n \neq 0$ such that $\mathbf{1}_A(m) = \mathbf{1}_A(m + n)$ for all $m \in \Z$. The shift graph $\mathbf{S}_{\Z}$ is the graph whose vertices are the aperiodic subsets of $\Z$ and in which every aperiodic set $A$ has two neighbors: $A + 1$ and $A - 1$. Thus, the connected component of $A$ in $\mathbf{S}_{\Z}$ is the following infinite path:
    \[
      \cdots \ \text{\textemdash}\ (A - 2) \ \text{\textemdash}\  (A - 1) \  \text{\textemdash} \  (A) \  \text{\textemdash}\  (A + 1) \  \text{\textemdash} \ (A + 2) \ \text{\textemdash} \ \cdots 
    \]
    (Since $A$ is aperiodic, all the vertices on this path are distinct.) In particular, every component of $\mathbf{S}_{\Z}$ is isomorphic to the Cayley graph of $\Z$, so it is indeed a $\Z$-graph.
    
    Now for the general construction. Let $\Gamma$ be a group with a finite symmetric generating set $S$. For a set $A \subseteq \Gamma$ and an element $\gamma \in \Gamma$, let
    \[
    \gamma A \,\defeq\, \set{\gamma \alpha \,:\, \alpha \in A}.
    \]
    A subset $A \subseteq \Gamma$ is \emph{aperiodic} if $\gamma A \neq A$ for all non-identity elements $\gamma \in \Gamma$. The vertices of $\mathbf{S}_\Gamma$ are the aperiodic subsets $A \subseteq \Gamma$, and for every $A \in V(\mathbf{S}_\Gamma)$ and $\sigma \in S$, $\mathbf{S}_\Gamma$ includes an edge from $A$ to $\sigma A$ labeled $\sigma$. By construction, for each $A \in V(\mathbf{S}_\Gamma)$, the mapping $\gamma \mapsto \gamma A$ establishes an isomorphism between $\mathrm{Cay}(\Gamma)$ and the connected component of $A$ in $\mathbf{S}_\Gamma$ \ep{the fact that $A$ is aperiodic guarantees that this mapping is injective}. Thus, $\mathbf{S}_\Gamma$ is a $\Gamma$-graph. Furthermore, by identifying each subset of $\Gamma$ with its characteristic function, we can view $V(\mathbf{S}_\Gamma)$ as a subset of $\set{0,1}^\Gamma$, which makes $\mathbf{S}_\Gamma$ a Borel graph on a zero-dimensional Polish space.
    
    We now have the following result, obtained independently by Bernshteyn \cite{Bercont} and Seward (unpublished): 
    
    \begin{theo}[{Bernshteyn \cite{Bercont}/Seward}]\label{theo:cont}
        Let $\Gamma$ be a group generated by a finite symmetric set $S \subseteq \Gamma$. For every LCL problem $\Pi$, the following statements are equivalent:
        
        \begin{enumerate}[label=\ep{\normalfont\roman*}]
            \item\label{item:i} $\mathbf{S}_\Gamma$ admits a continuous solution to $\Pi$.
            
            \item\label{item:ii} There is an $o(\log n)$-round deterministic \LOCAL algorithm that solves $\Pi$ on $n$-vertex subgraphs of the Cayley graph $\mathrm{Cay}(\Gamma)$.
        \end{enumerate}
    \end{theo}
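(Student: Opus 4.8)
The plan is to prove both implications by exploiting a single dictionary between the two models: a vertex $A$ of $\mathbf{S}_\Gamma$ records, in the bits of the configuration $A\in\set{0,1}^\Gamma$, exactly the kind of ``local data'' that identifiers supply in the \LOCAL model; a continuous labeling is precisely one whose value at $A$ depends on only finitely many of these bits (a finite \emph{window}), and the radius of that window plays the role of the round count. A useful preliminary reduction is to invoke the gap theorem of Chang--Kopelowitz--Pettie \cite{CKP}: since no LCL has deterministic complexity strictly between $\omega(\log^\ast n)$ and $o(\log n)$, statement \ref{item:ii} is equivalent to the existence of an $O(\log^\ast n)$-round algorithm, and such an algorithm decomposes canonically into a symmetry-breaking phase (compute a proper $O(1)$-coloring, which costs $O(\log^\ast n)$ rounds and is the only non-constant-radius part) followed by an $O(1)$-round local rule; the color-reduction step of this phase is itself a fixed local rule.

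For \ref{item:ii}$\Rightarrow$\ref{item:i} I would first establish the descriptive symmetry-breaking primitive: for each radius $r$, the power graph $(\mathbf{S}_\Gamma)^r$ admits a continuous proper coloring with finitely many colors. I would prove this directly from the zero-dimensionality and freeness of the shift action, by constructing a clopen sequence of ``marker'' sets (topological Rokhlin/Kakutani towers) that partition each orbit into uniformly bounded, consistently chosen blocks; coloring each vertex by its position within its block yields the desired finite continuous coloring. Granting this, I feed such a continuous $C$-coloring into the decomposed $O(\log^\ast n)$ algorithm: because the input already has a constant number $C$ of colors, the color-reduction phase terminates in $O(\log^\ast C)=O(1)$ rounds, and since every step is a fixed local rule, the whole computation depends on a bounded-radius ball around each vertex. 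The resulting labeling is therefore locally constant, hence continuous, and it solves $\Pi$ because the algorithm's output satisfies the (local) constraints of $\Pi$ on every finite subgraph of $\mathrm{Cay}(\Gamma)$.

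For \ref{item:i}$\Rightarrow$\ref{item:ii} the key is to convert the qualitative continuity of a solution $f\colon V(\mathbf{S}_\Gamma)\to\Lambda$ into a uniform window bound, which at first looks problematic because $V(\mathbf{S}_\Gamma)$ is not compact. I would resolve this by filtering through the compact ambient space $\set{0,1}^\Gamma$: for $m\in\N$ let $K_m$ be the clopen set of configurations $A$ with $\gamma A\neq A$ for every non-identity $\gamma$ in the radius-$m$ ball, so that $V(\mathbf{S}_\Gamma)=\bigcap_m K_m$ is a decreasing intersection of compact clopen sets. Writing each clopen set $f^{-1}(\lambda)$ as $U_\lambda\cap V(\mathbf{S}_\Gamma)$ with $U_\lambda$ open in $\set{0,1}^\Gamma$, the closed set $\set{0,1}^\Gamma\setminus\bigcup_\lambda U_\lambda$ misses $\bigcap_m K_m$, so by compactness it misses some $K_M$; thus the $U_\lambda$ cover the compact set $K_M$, and a second compactness argument extracts a uniform radius $r$ such that the radius-$r$ window of any $A\in K_M$ already determines a valid label. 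This produces a genuine finite-window local rule $g$ that agrees with $f$ on the aperiodic points; because every finite locally-aperiodic window extends to a fully aperiodic configuration, on which $f$ obeys the constraints of $\Pi$, the rule $g$ solves $\Pi$ on all of $K_M$. The \LOCAL algorithm is then immediate: in $O(\log^\ast n)$ rounds use the Linial/GPS symmetry-breaking of Theorem~\ref{theo:GPS} to manufacture from the identifiers a labeling of the input graph that locally looks like the restriction of a $K_M$-configuration (i.e.\ has no short period), and then apply $g$ in $O(r)=O(1)$ further rounds; the total running time is $O(\log^\ast n)=o(\log n)$.

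The step I expect to be the main obstacle is the descriptive symmetry-breaking primitive used in \ref{item:ii}$\Rightarrow$\ref{item:i}, namely producing a continuous finite coloring of the powers of $\mathbf{S}_\Gamma$ for an arbitrary finitely generated $\Gamma$. This is where the specific structure of the shift graph and the zero-dimensionality of its vertex space are essential---the continuous chromatic number need not obey the $d+1$ bound enjoyed by Borel colorings (Theorem~\ref{theo:KST}), and the clopen marker construction must be carried out uniformly across all orbit types; the compactness bridge in the converse direction, by contrast, is conceptually the cleanest part once the filtration $K_m$ is in place.
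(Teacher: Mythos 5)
Your treatment of \ref{item:ii} $\Longrightarrow$ \ref{item:i} is, in outline, the intended argument: as the paper indicates, this implication is proved just like Theorem~\ref{theo:dist_to_desc}\ref{item:a}, by running the algorithm on $\mathbf{S}_\Gamma$ with ``fake identifiers'' supplied by a continuous proper coloring, with finitely many colors, of a suitable power of $\mathbf{S}_\Gamma$; and that symmetry-breaking primitive, which you correctly flag as nontrivial, is indeed available via clopen marker constructions on the free part of the shift in the spirit of Gao--Jackson--Seward \cite{GJS2}. (Your detour through the gap theorem of \cite{CKP} and the normal form ``$O(\log^\ast n)$ symmetry breaking plus constant-radius rule'' is harmless but unnecessary: one can simulate an $o(\log n)$-round algorithm directly, pretending $n$ is a sufficiently large constant.)

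The direction \ref{item:i} $\Longrightarrow$ \ref{item:ii}---which the paper warns is the significantly more difficult one---is where your proof breaks, and at exactly the step you call the cleanest. The sets $K_m$ are \emph{not} clopen: for a fixed non-identity $\gamma$, the set $\set{A : \gamma A \neq A}$ is the union over $\beta \in \Gamma$ of the clopen sets $\set{A : \mathbf{1}_A(\gamma^{-1}\beta) \neq \mathbf{1}_A(\beta)}$, an \emph{infinite} union, hence open but in general not closed \ep{for $\Gamma = \Z$, the all-zero configuration, which is periodic, is a limit of configurations with no short period}. So the $K_m$ are not compact, and the nested-compacta argument is invalid. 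Worse, its conclusion is outright false: no uniform-radius window rule $g$ agreeing with $f$ on the aperiodic points can exist in general. Take $\Gamma = \Z$ and $\Pi$ the proper $3$-coloring problem. Statement \ref{item:i} holds \ep{apply your own implication \ref{item:ii} $\Longrightarrow$ \ref{item:i} to the $O(\log^\ast n)$-round algorithm of Theorem~\ref{theo:GPS} with $d = 2$}. Yet for any radius $r$ there are aperiodic sets $A \subseteq \Z$ with $A \cap [-N, N] = \0$ for some $N \gg r$; the pairwise-adjacent vertices $A, A+1, \ldots, A + (N-r)$ of $\mathbf{S}_\Z$ then have identical all-zero radius-$r$ windows, so any radius-$r$ rule assigns them all the same color and therefore cannot agree with any proper coloring. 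This is precisely the crux of the theorem: $V(\mathbf{S}_\Gamma)$ is not compact, pointwise continuity of $f$ gives each vertex a finite window but of \emph{unbounded} radii, and no compactness argument can make the radii uniform, since aperiodicity is a global condition that imposes no local structure. Overcoming this obstruction---via a genuine structural analysis of continuous functions on $V(\mathbf{S}_\Gamma)$, the method of conditional probabilities, and the machinery of \cite{GJS2}, \cite{STD}, and \cite{Elek}---is the real content of the implication; your proposal has the relative difficulty of the two directions inverted.
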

    
    The implication \ref{item:ii} $\Longrightarrow$ \ref{item:i} of Theorem~\ref{theo:cont} is proved in a manner analogous to the proof of Theorem~\ref{theo:dist_to_desc}\ref{item:a} sketched in \S\ref{subsec:dist_to_disc}. The implication \ref{item:i} $\Longrightarrow$ \ref{item:ii} is significantly more difficult and involves a careful analysis of the structure of continuous functions on $V(\mathbf{S}_\Gamma)$. Interestingly, this analysis again relies, among other things, on tools from computer science such as the so-called method of conditional probabilities. It also builds on earlier work of Gao--Jackson--Seward \cite{GJS2}, Seward--Tucker-Drob \cite{STD}, and Elek \cite{Elek}.
    
    Theorem~\ref{theo:cont} explains the analogies between known results in continuous combinatorics and distributed computing. For example, in the case $\Gamma = \Z^d$, the continuous combinatorics of the shift graph have been studied in detail by Gao \emph{et al.}\ \cite{Abelian}. Similarly, Brandt \emph{et al.}\ \cite{grids} investigated the LCL problems that can be solved on $d$-dimensional square grids by efficient distributed algorithms. The results obtained by these two groups of researchers---independently and using different methods---perfectly parallel each other, exactly as Theorem~\ref{theo:cont} predicts.
    
    \subsection{Baire-measurable solutions to LCL problems on trees}
    
    The last result we would like to mention is a very recent theorem that establishes a perfect equivalence between yet another pair of facets of the worlds of descriptive combinatorics and distributed computing: 
    
    \begin{theo}[{Brandt \emph{et al.} \cite{trees}}]\label{theo:Baire}
        Fix $d \in \N$. For every LCL problem $\Pi$, the following statements are equivalent:
        \begin{enumerate}[label=\ep{\normalfont\roman*}]
            \item\label{item:iBaire} Every Borel graph $G$ in which every component is an infinite $d$-regular tree admits a Baire-measurable solution to $\Pi$.
            
            \item\label{item:iiBaire} There is an $O(\log n)$-round deterministic \LOCAL algorithm that solves $\Pi$ on $n$-vertex trees of maximum degree $d$.
        \end{enumerate}
    \end{theo}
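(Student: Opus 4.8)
The plan is to prove the two implications by routing both through a single purely combinatorial condition, solvability of $\Pi$ \emph{on toasts}. A \emph{toast} on a graph all of whose components are trees is a family $\mathcal{D}$ of finite connected vertex sets, called \emph{tiles}, such that every vertex belongs to arbitrarily large tiles and the boundaries of any two distinct tiles are far apart in $G$; thus the tiles organize each component into a nested hierarchy. I would call $\Pi$ \emph{toast-solvable} if there is a local rule that, given any toast, produces a correct labeling by processing the tiles from smallest to largest, where the labels chosen inside a tile depend only on a bounded neighborhood of the tile and on the labels already committed within it. I would then establish \ref{item:iiBaire} $\Leftrightarrow$ toast-solvable $\Leftrightarrow$ \ref{item:iBaire}, so that the target equivalence follows.

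For \ref{item:iiBaire} $\Leftrightarrow$ toast-solvable I would use the \emph{rake-and-compress} decomposition of finite trees: alternately deleting leaves and contracting maximal paths of degree-two vertices exhausts any $n$-vertex tree in $O(\log n)$ phases, and---after an initial $O(\log^\ast n)$-round coloring as in Theorem~\ref{theo:GPS}---these phases are computable by an $O(\log n)$-round deterministic \LOCAL algorithm. The phase indices cut each tree into $O(\log n)$ nested levels, i.e., into a toast; running a toast-solving rule level by level then gives an $O(\log n)$ algorithm, and conversely an $O(\log n)$ algorithm executed along the levels of an abstract toast witnesses toast-solvability.

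For toast-solvable $\Rightarrow$ \ref{item:iBaire} the key input is that every Borel graph $G$ whose components are infinite $d$-regular trees admits a \emph{Baire-measurable toast}. Building the tiles one level at a time, each level only requires selecting a maximal family of pairwise well-separated finite tiles, which can be carried out in a Borel fashion on a comeager $G$-invariant set after discarding a meager invariant remainder; the union over all levels is a toast defined off a meager set. Feeding it into the toast-solving rule yields a labeling that is correct and Borel on the comeager part, and iterating the construction on the (again $d$-regular tree) remainder extends it to a labeling of all of $V(G)$ whose classes differ from Borel sets only by a meager set, hence are Baire-measurable. It is worth stressing that this is exactly where Baire category, rather than Borelness, is essential: because trees have exponential growth, $G$ need not carry any Borel toast (consistent with Marks's Theorem~\ref{theo:marks}), whereas a comeager one always exists.

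The crux is the remaining implication \ref{item:iBaire} $\Rightarrow$ \ref{item:iiBaire}, which I would attack by contraposition using the complexity gap for local problems on bounded-degree trees: if $\Pi$ has no $O(\log n)$-round deterministic algorithm, its deterministic \LOCAL complexity jumps to $n^{\Omega(1)}$, so $\Pi$ is genuinely global. I would then exhibit a single canonical Borel tree graph---for instance the shift graph $\mathbf{S}_\Gamma$ of a group $\Gamma$ (a free group or free product of cyclic groups) whose Cayley graph is the $d$-regular tree---and argue that a Baire-measurable solution on $\mathbf{S}_\Gamma$, being equal to a continuous function on a comeager set and therefore determined at each vertex by a finite (though a priori unbounded) neighborhood, could be uniformized into a sublinear deterministic procedure on finite trees, contradicting the lower bound. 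The main obstacle is precisely this uniformization step: turning the merely pointwise-finite radius of determination supplied by Baire-measurability into a genuinely uniform $O(\log n)$ bound. Overcoming it requires a careful structural analysis of continuous functions on $\mathbf{S}_\Gamma$ together with combinatorial tools such as the method of conditional probabilities, in the same spirit as the hard direction \ref{item:i} $\Rightarrow$ \ref{item:ii} of Theorem~\ref{theo:cont}.
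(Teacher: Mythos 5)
Your overall architecture---routing the equivalence through a single combinatorial property of $\Pi$, proved equivalent to \ref{item:iBaire} and to \ref{item:iiBaire} separately---is exactly the strategy the paper describes for Theorem~\ref{theo:Baire}, and two of your three legs are sensible and consistent with the known arguments: the characterization of $O(\log n)$ solvability on finite trees via rake-and-compress, and the implication ``toast-solvable $\Rightarrow$ \ref{item:iBaire}'' via a toast constructed Baire-measurably on a comeager invariant set (including your correct observation that a \emph{Borel} toast cannot exist in general, by Theorem~\ref{theo:marks}, while a generic one does).

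However, there is a genuine gap: the cycle is never closed. Your second and third paragraphs give \ref{item:iiBaire} $\Leftrightarrow$ toast-solvable and toast-solvable $\Rightarrow$ \ref{item:iBaire}, so everything hinges on \ref{item:iBaire} $\Rightarrow$ \ref{item:iiBaire} (equivalently, \ref{item:iBaire} $\Rightarrow$ toast-solvable), and your final paragraph does not prove it---you yourself flag the uniformization step as an unresolved obstacle. That step is not a technicality; it is the mathematical core of the theorem, and the route you propose is unlikely to work as stated. A Baire-measurable solution on $\mathbf{S}_\Gamma$ is continuous only on a comeager set; that set is not compact, so there is no modulus from which to extract a uniform radius of determination, and it is not respected by the adversarial identifier assignments a deterministic \LOCAL algorithm must tolerate, so ``generic'' behavior on the shift graph does not translate into worst-case behavior on finite trees with arbitrary identifiers. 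This is precisely why the hard direction of Theorem~\ref{theo:cont}, where genuine continuity is available everywhere, does not simply carry over, and why the bound in \ref{item:iiBaire} is $O(\log n)$ rather than $o(\log n)$. The known proof (per the paper's description and \cite{trees}) does not uniformize a solution in hand at all: it shows, via Baire-category/game-theoretic arguments on purpose-built Borel tree graphs, that Baire-measurable solvability \emph{forces} the combinatorial certificate, which then yields the algorithm through the Chang--Pettie characterization of $O(\log n)$ complexity on trees. As it stands, your proposal is a correct proof plan plus an essentially complete argument for \ref{item:iiBaire} $\Rightarrow$ \ref{item:iBaire}; the converse implication remains unproven.
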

    
    This is a truly inspiring result, which highlights how deep the connections between descriptive combinatorics and distributed computing lie. What makes it different from Theorems \ref{theo:dist_to_desc} and \ref{theo:cont} is that the complexity bound on the distributed algorithm here is $O(\log n)$, not $o(\log n)$. Note that in $O(\log n)$ rounds, a vertex of an $n$-vertex tree of maximum degree $d \geq 3$ is guaranteed to discover a vertex of degree less than $d$---and hence this algorithm cannot be directly used on a $d$-regular graph $G$. 
    Unsurprisingly, the proof of the implication \ref{item:iiBaire} $\Longrightarrow$ \ref{item:iBaire} in Theorem~\ref{theo:Baire} is considerably more difficult than the proof of Theorem~\ref{theo:dist_to_desc}\ref{item:a}. In fact, both implications in Theorem~\ref{theo:Baire} are highly nontrivial. The high-level idea of the argument is to isolate a certain combinatorial property of LCL problems and show that this property is equivalent to each of \ref{item:iBaire}, \ref{item:iiBaire} separately \ep{the former equivalence was established by Bernshteyn (unpublished), the latter---by Brandt \emph{et al.} \cite{trees}}.
    
    \subsection{Final thoughts and open problems}
    
    The intimate interactions between descriptive combinatorics and distributed computing have only very recently been discovered, and most questions about them remain open. Perhaps the central open problem is this:
    
    \begin{ques}
        Is there a version of Theorem~\ref{theo:cont} for Borel solutions to LCL problems? In other words, is there a precise algorithmic counterpart to the notion of a Borel solution \ep{similar to how $o(\log n)$-round \LOCAL algorithms are equivalent to continuous solutions}?
    \end{ques}
    
    The same questions for measurable and \ep{except for graphs with no cycles} Baire-measurable solutions are also open. Even on graphs as simple as $d$-regular trees, many open questions remain. The diagram in Fig.~\ref{fig:diagram} summarizes our knowledge regarding the complexity of LCL problems on $d$-regular trees. In particular, we know that there are LCL problems that admit Borel solutions but cannot be solved by an $o(\log n)$-round randomized algorithm, but there are also LCL problems that can be solved by such an algorithm but do not admit Borel solutions. Therefore, a new algorithmic framework seems to be needed to capture the notion of a Borel solution precisely.
    
    \begin{figure*}
		\centering	
		\begin{tikzpicture}
		    \node (detlittle) at (4,2.5) {$\mathsf{DetLOCAL}(o(\log n))$};
		    
		    \node (cont) at (4,0) {$\mathsf{CONTINUOUS}$};
		    
		    \node (Borel) at (7,0) {$\mathsf{BOREL}$};
		    
		    \node (meas) at (10,0) {$\mathsf{MEASURABLE}$};
		    
		    \node (Baire) at (14,0) {$\mathsf{BAIRE\ MEASURABLE}$};
		    
		    \node (Detbig) at (14,2.5) {$\mathsf{DetLOCAL}(O(\log n))$};
		    
		    \node (Ranlittle) at (8.5,2.5) {$\mathsf{RandLOCAL}(o(\log n))$};

		    \draw (detlittle) -- node [midway,above,sloped] {$=$} (cont);
		    \draw (cont) -- node [midway,above,sloped] {$\subset$} (Borel);
		    \draw (Borel) -- node [midway,above,sloped] {$\subset$} (meas);
		    \draw (meas) -- node [midway,above,sloped] {$\subset$} (Baire);
		    \draw (Detbig) -- node [midway,above,sloped] {$=$} (Baire);
		    \draw (Borel) -- node [midway,above,sloped] {$\not\subseteq$} node [midway,below,sloped] {$\not\supseteq$} (Ranlittle);
		    \draw (Ranlittle) -- node [midway,above,sloped] {$\subset$} (meas);
		\end{tikzpicture}
		\caption{Known equalities and inequalities between classes of LCL problems on $d$-regular trees. Here $\mathsf{DetLOCAL}(T(n))$ and $\mathsf{RandLOCAL}(T(n))$ denote the classes of LCL problems solvable by a $T(n)$-round determinisitic (respectively, randomized) \LOCAL algorithm. The symbol ``$\subset$'' indicates strict inclusion.}\label{fig:diagram}
	\end{figure*}
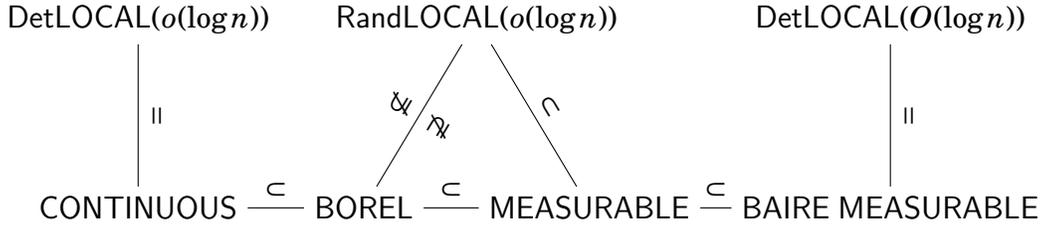
    
    On classes of graphs other than trees, our knowledge is even scanter. For example, the following basic question is open:
    
    \begin{ques}
        Fix an integer $d \geq 2$. Does there exist an LCL problem $\Pi$ such that the shift graph $\mathbf{S}_{\Z^d}$ admits a measurable solution to $\Pi$ but not a Borel one? What about Baire-measurable solutions?
    \end{ques}
    
    Another area with close ties to distributed computing and descriptive combinatorics is the study of so-called \emph{finitary factors of i.i.d.~processes}. This is a class of particularly well-behaved random processes on networks of great interest in probability theory. Finitary factors of i.i.d.~processes can be used to create a complexity hierarchy of LCL problems, and Greb\'ik and Rozho\v{n} recently discovered that in many cases this hierarchy parallels the one arising from considering descriptive and \LOCAL complexity. For more details on this exciting connection, see the papers \cite{GRgrids} by Greb\'ik and Rozho\v{n} and \cite{trees} by Brandt \emph{et al.}
    
    In spite of all the open problems mentioned above, it really does appear that the combination of tools from descriptive set theory and distributed computing is the key to attaining a deep understanding of the behavior of LCL problems under various ``effectiveness'' requirements. Exciting discoveries await, and they will surely enrich descriptive combinatorics and distributed computing alike.
    
    \vspace{10pt}
    
    \noindent \emph{Acknowledgments}. I am very grateful to the anonymous referees for carefully reading this article and providing many helpful suggestions.
    
    \printbibliography
    
\end{document}